\newtheorem{thm}{Theorem}[section]
\newtheorem{lem}[thm]{Lemma}
\newtheorem{conj}[thm]{Conjecture}
\newtheorem{prop}[thm]{Proposition}
\newtheorem{defi}{Definition}
\newcommand{\C}{\mathcal{C}}
\renewcommand{\l}{\left}
\renewcommand{\r}{\right}
\title{Decomposition of Cliques into $k$-Star-Forests}
\author{Jiaxi Nie\thanks{School of Mathematics, Georgia Institute of Technology, Atlanta, GA 30332, USA. Email: {\tt jnie47@gatech.edu}.}
 \and Yibo Ren\thanks{School of Mathematics, Fudan University, Shanghai 200438, China. Enail: {\tt ybren24@m.fudan.edu.cn}.} \and Hehui Wu\thanks{Shanghai Center for Mathematical Sciences, Fudan University, Shanghai 200438, China. Research supported by National Natural Science Foundation of China grant 12371343 and grant 11931006, National Key Research and Development Program of China 2020YFA0713200. Email: {\tt hhwu@fudan.edu.cn.}}}
\date{\today}
\begin{document}

\maketitle

\begin{abstract}
A $k$-star-forest is a forest with at most $k$ connected components where each component is a star. Let $F_k(n)$ be the minimum integer such that the complete graph on $n$ vertices can be decomposed into $F_k(n)$ $k$-star-forests. Pach, Saghafian and Schnider showed that $F_2(n)=\lceil 3n/4 \rceil$. In this paper, we show that $F_3(n)=5n/9$ when $n$ is a multiple of 27. Further, for $k\ge 4$, we show that $F_k(n)=n/2+2$ when $n>2k$ and $n\equiv 4 \pmod{12}$. Our results disprove a conjecture of Pach, Saghafian and Schnider.
\end{abstract}

\section{Introduction}
It is a classic theme in combinatorics to determine the minimum number of subgraphs of a certain type that a graph $G$ can be decomposed into. Here a decomposition of a graph $G$ is a family of subgraphs that partitions $E(G)$. For example, the seminal result of Graham and Pollak\cite{graham1971addressing} shows that $K_n$, the complete graph on $n$ vertices, cannot be decomposed into less than $n-1$ complete bipartite graphs (on the other hand, it is easy to see that $K_n$ can be decomposed into $n-1$ stars). Other known results under this theme have also been obtained by Vizing~\cite{vizing1964estimate} for matchings, by Lovász~\cite{lovasz1968covering} for paths and cycles, and by Nash-Williams~\cite{nash1964decomposition} for forests. 

 A {\em star-forest} is a forest whose components are all stars. Akiyama and Kano~\cite{akiyama1984path} proved that $K_n$ cannot be decomposed into less than $\lceil n/2\rceil+1$ star-forests and this bound is tight.

 For integer $k\ge 1$, a {\em $k$-star-forest} is a star-forest with at most $k$ connected components. Recently, Pach, Saghafian and Schnider~\cite{pach2023decomposition} studied the decomposition of cliques into $k$-star-forests. Let $F_k(n)$ be the minimum integer such that the complete graph on $n$ vertices can be decomposed into $F_k(n)$ $k$-star-forests. Pach, Saghafian and Schinider proved the following theorem.

\begin{thm}[\cite{pach2023decomposition}, Theorem 4] \label{thm:2-forest}
For $n\ge 3$, $F_2(n)=\lceil3n/4\rceil$.
\end{thm}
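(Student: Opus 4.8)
The plan is to prove matching upper and lower bounds on $F_2(n)$, and it is convenient throughout to think of a decomposition into $2$-star-forests as a decomposition of $E(K_n)$ into stars $T_1,\dots,T_r$ together with a partition of $\{T_1,\dots,T_r\}$ into blocks of size at most $2$, each block consisting of two vertex-disjoint stars; the number of blocks is the number of $2$-star-forests used. Since then the average star carries only about $\tfrac23$ of the maximum possible $n-1$ edges, the upper-bound target is a ``schedule'' in which each vertex acts as a star-centre about $3/2$ times on average.

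\textbf{Upper bound.} I would exhibit an explicit decomposition of $K_n$ into $\lceil 3n/4\rceil$ $2$-star-forests. First I would do this when $4\mid n$: group the vertices and cycle the centre roles through a carefully chosen list so that half the vertices serve as a star-centre twice and half exactly once, keeping the leaf sets of paired stars disjoint, then check that each edge is covered exactly once; the base case $n=4$ ($F_2(4)=3$, realised by the three star-forests $K_{1,3}$, $K_{1,2}$, $K_{1,1}$) already shows the flavour. For general $n$ I would absorb the extra $n\bmod 4$ vertices into the $4\mid n$ construction together with a bounded number of additional small forests, checking the count still rounds to $\lceil 3n/4\rceil$.

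\textbf{Lower bound.} The key preliminary observation is that averaging alone cannot suffice: giving every edge weight $\tfrac1{n-1}$ shows every $2$-star-forest has total weight $\le 1$, so the fractional relaxation only yields $F_2(n)\ge n/2$, and by vertex-transitivity no symmetric weighting beats this. Hence the extra factor $3/2$ must come from an integral, structural argument. Write $t=F_2(n)$ and let $t_1,t_2$ be the numbers of forests that are a single star, resp. a union of two nontrivial stars. Tracking the total deficiency $D:=\sum_i\bigl((n-1)-|E(S_i)|\bigr)=(n-1)t-\tbinom n2$, the claim $t\ge 3n/4$ is \emph{exactly} $D\ge\tfrac12\binom n2$. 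Two sources of deficiency: (i) a two-star forest has at most $n-2$ edges, contributing $\ge t_2$ to $D$; (ii) for two single-star forests with distinct centres $u,v$ and ``missing sets'' $M_u,M_v$ (the non-leaves), the edge $uv$ lies in only one forest, forcing $v\in M_u$ or $u\in M_v$, so the single-star forests alone contribute deficiency $\sum_{\text{single}}|M_u|\ge\binom{t_1}{2}$ — and (ii) by itself already recovers the correct bound $t\ge n-1$ in the all-single-star case.

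The remaining, and hardest, step is the regime where many forests are two-star forests. Here I would show that nearly-spanning two-star forests also conflict with one another and with the single-star forests — by analysing, for a pair of such forests, how the edges among their at most four centres can be distributed and how many of those centres are then forced to be isolated in one of the two forests — and combine this with (i) and (ii) to get a lower bound on $D$ of the shape $D\gtrsim\max\{\binom{t_1}{2},\,t_2\}+(\text{cross terms})$ strong enough to force $D\ge\tfrac12\binom n2$; the constant $3/4$ should emerge from optimising the trade-off between $t_1$ (which carries a quadratic deficiency penalty) and $t_2$. A final round of ceiling/parity bookkeeping, plus handling the few degenerate forests that are single edges, then gives the exact value $\lceil 3n/4\rceil$. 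I expect this last combining step — squeezing out of the two-star forests a clean enough deficiency estimate to bridge the gap between the easy $n/2$ and the truth $3n/4$ — to be the crux of the whole argument.
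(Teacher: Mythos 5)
This theorem is cited from \cite{pach2023decomposition} and is not re-proved in the present paper; the paper only recalls the construction (the broken double star plus a pairing of the leftover perfect matching into $\lceil n/4\rceil$ additional $2$-star-forests) and notes that the lower bound in \cite{pach2023decomposition} is via the root-hypergraph idea that is further developed here for $3$-star-forests. Your proposal does not contain a proof: both halves are plans rather than arguments, and the lower bound has a genuine gap.

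On the upper bound, ``cycle the centre roles through a carefully chosen list'' is not a construction. The concrete object you want is exactly the one sketched in the paper: for $n=2t$, the broken double star decomposition $S_1,\dots,S_t$ covers every edge except the perfect matching $\{v_iv_{i+t}\}$, and those $t$ edges are then grouped into $\lceil t/2\rceil$ pairs, each pair forming a $2$-star-forest of two disjoint edges, giving $t+\lceil t/2\rceil=\lceil 3n/4\rceil$ in total. Your heuristic (about $3/2$ centre-appearances per vertex, stars carrying about $\tfrac{2}{3}(n-1)$ edges on average) is consistent with this but does not substitute for writing the decomposition down, and the odd-$n$ case and the sharp ceiling both need to be handled, not just asserted.

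On the lower bound, the reformulation $t\ge 3n/4 \iff D\ge\tfrac12\binom n2$ is correct, and observation (ii) correctly recovers $t\ge n-1$ when all forests are single stars with distinct centres (you should also say what happens when centres repeat). But (i) is essentially vacuous ($D\ge t_2$ is far from $\tfrac12\binom n2$), and the entire content of the theorem lives in the regime you label ``the remaining, and hardest, step'' and then do not carry out. ``I would show that nearly-spanning two-star forests also conflict with one another\ldots and combine this\ldots to force $D\ge\tfrac12\binom n2$'' is a description of what a proof would need to accomplish, not a proof. The actual argument in \cite{pach2023decomposition} is a structural double-count on the root-(hyper)graph of the decomposition, of the same flavour as the argument this paper gives in Subsection~\ref{subsec:3-star-lowerbound} for the $3$-star case: build the (here, ordinary) graph $B_0$ on the vertex set whose edges record the centre-pairs of each forest, show no isolated vertices (this is \Cref{prop:no-iso}), analyse degree-one vertices and how hyperedges may meet them (the analogue of \Cref{Lem:3}), and double-count degrees. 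Your deficiency bookkeeping is a reasonable alternative scaffolding, and it is plausible that it can be pushed through, but as written the crux is missing and the proposal does not establish the bound.
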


We use the notation $S(v;u_1,\dots,u_t)$ to denote the star with center $v$ and leaves $u_1,\dots,u_t$. The construction for \Cref{thm:2-forest} can be described as follows. For simplicity, we will only describe it for even integer $n$. Let $n=2t$ and label the vertices as $\{v_1,\dots,v_{2t}\}$ (indices are modulo $2t$). For $1\le i\le t$, let $S_i$ be the 2-star-forest consisting of stars $S(v_i;v_{i+1},\dots,v_{i+t-1})$ and $S(v_{i+t};v_{i+t+1},\dots,v_{i+2t-1})$. One can check that $S_1,\dots,S_t$ cover all edges but those of the form $v_iv_{i+t}$. This construction, called {\em broken double star} in~\cite{antic2024star}, shows that $K_n$ can be decomposed into at most $n/2+1$ star-forests. Further, Anti{\'c}, Gli{\v{s}}i{\'c} and Milivoj{\v{c}}evi{\'c}~\cite{antic2024star} showed that such decomposition is unique up to isomorphism. This result will be useful later in this paper.

\begin{thm}[\cite{antic2024star}, Theorem 1]\label{thm:broken double star}
Let $n = 2t$ be an even integer. Then any decomposition of $K_n$ into
$t+1$ star-forests is a broken double star decomposition.
\end{thm}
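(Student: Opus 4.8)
The plan is to induct on $t$; the base cases of small $n$ are checked directly, and the assertion for $K_{2(t-1)}$ serves as the inductive hypothesis. So let $S_1,\dots,S_{t+1}$ be a decomposition of $K_{2t}$ into $t+1$ star-forests. Write $c_j$ for the number of nontrivial star components of $S_j$ and $i_j$ for its number of isolated vertices, so that $|E(S_j)|=2t-c_j-i_j$; summing over $j$ and comparing with $|E(K_{2t})|=\binom{2t}{2}$ yields the identity $\sum_j(c_j+i_j)=3t$. Since $t+1=\lceil 2t/2\rceil+1$ is the minimum number of star-forests needed (Akiyama--Kano), no $S_j$ is empty, so each $c_j\ge 1$; were every $c_j\ge 3$ we would get $\sum_j c_j\ge 3(t+1)>3t\ge\sum_j c_j$, a contradiction, so some $S_j$ has $c_j\le 2$. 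One also checks, by the same deletion trick used below and the inductive hypothesis, that no $S_j$ is a single star once $t\ge 3$ (otherwise the two vertices one deletes have too few incident edges available); hence we may assume $S_{t+1}$ is a star-forest with exactly two components, with centers $u$ and $v$.

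Every edge of a star-forest meets a center, so $S_{t+1}$ has no edge inside $V':=V(K_{2t})\setminus\{u,v\}$; hence $S_1|_{V'},\dots,S_t|_{V'}$ decompose $K_{V'}\cong K_{2(t-1)}$ into $t=\lceil 2(t-1)/2\rceil+1$ star-forests, which by the inductive hypothesis is a broken double star decomposition: there is a cyclic order $w_1,\dots,w_{2t-2}$ of $V'$ such that, after relabeling, $S_i|_{V'}$ is the double star with centers $w_i,w_{i+t-1}$ (the leaves of each center forming a consecutive arc) for $1\le i\le t-1$, and $S_t|_{V'}$ is the perfect matching $\{w_jw_{j+t-1}\}$. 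Now comes the crucial rigidity step: because each $S_i|_{V'}$ spans $V'$, no vertex of $V'$ can be made a leaf of a new star centered at $u$ in $S_i$ (it would acquire degree $2$), so $u$ has at most one incident edge in each of $S_1,\dots,S_t$; comparing with $\deg_{K_{2t}}(u)=2t-1$ forces $u$ to be the center of a star with at least $t-1$ leaves in $S_{t+1}$, and likewise $v$. As these two stars are disjoint and $|V(K_{2t})|=2t$, the forest $S_{t+1}$ consists of exactly two stars, centered at $u$ and $v$, each with exactly $t-1$ leaves, whose leaf sets partition $V'$; moreover equality throughout forces $u$ (and $v$) to have exactly one incident edge in every $S_i$ with $i\le t$, joining it — for $i\le t-1$ — to one of the two centers $w_i,w_{i+t-1}$ of $S_i|_{V'}$, while the unique index carrying the edge $uv$ must be the matching index.

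It remains to reconstruct a global cyclic order. Since $u$ attaches to exactly one of each antipodal pair of centers $\{w_i,w_{i+t-1}\}$, and these $t-1$ pairs together exhaust $V'$, the leaf set of $u$'s star in $S_{t+1}$ should be a consecutive arc of $w_1,\dots,w_{2t-2}$ and $v$'s star the complementary arc; inserting $u$ into the gap bounding its arc and $v$ into the antipodal gap then exhibits $S_1,\dots,S_{t+1}$ as a broken double star decomposition of $K_{2t}$, completing the induction. I expect this last step to be the main obstacle: converting the purely local ``one center of each antipodal pair'' information into the global statement that $u$'s and $v$'s leaf sets are complementary arcs — while simultaneously tracking the matching coordinate and the single index at which $uv$ sits — is genuinely delicate, and is precisely where the tightness of $\sum_j(c_j+i_j)=3t$ is pushed to the limit. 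By comparison the counting and the deletion step are routine, and the rigidity of $S_{t+1}$, though itself somewhat delicate, falls out cleanly once the inductive hypothesis is available.
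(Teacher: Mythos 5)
This theorem is cited from \cite{antic2024star} and is not proved in the paper; there is therefore no internal proof to compare against, and I evaluate your attempt on its own. Your setup is sound: the identity $\sum_j(c_j+i_j)=3t$, the elimination of a one-component forest (deleting its center would give a decomposition of $K_{2t-1}$ into only $t$ star-forests, contradicting Akiyama--Kano), and the degree-one bound for $u$ and $v$ in each $S_i$ with $i\le t$ are all correct and force $|L_u|=|L_v|=t-1$. Your unargued assertion that $uv$ sits in the matching index is also true, but it needs a proof: if $uv\in S_j$ with $j\le t-1$, then $u$'s and $v$'s edges in $S_t$ must go to the two centers of $S_j|_{V'}$, which are paired in the matching $S_t|_{V'}$, so $S_t$ would contain the path $u$--$w_j$--$w_{j+t-1}$--$v$ and fail to be a star-forest.

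The real problem is the step you flag. The information you extract---$L_u,L_v$ partition $V'$ and split each antipodal pair $\{w_i,w_{i+t-1}\}$---is strictly weaker than ``$L_u$ is a consecutive arc,'' and the constraints derived so far genuinely do not force the arc structure. Concretely, for $t=4$: take the broken double star on $K_6$ with cyclic order $w_1,\dots,w_6$, set $L_u=\{w_1,w_3,w_5\}$, $L_v=\{w_2,w_4,w_6\}$, and extend each $S_i|_{V'}$ by attaching $u$ to the $L_v$-center and $v$ to the $L_u$-center (and $uv$ to the matching). This yields a valid decomposition of $K_8$ into five star-forests which is \emph{not} isomorphic to the broken double star described in this paper: for instance, the leaf set of $u$'s double star meets that of every other vertex in exactly one element, whereas in the broken double star these intersection sizes take values $0,1,2$. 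So either the literal ``arc'' form of the statement needs more than the local antipodal-pair data, and your sketch is missing an essential idea; or ``broken double star decomposition'' in \cite{antic2024star} means something weaker (e.g.\ a decomposition whose root-hypergraph consists of $t$ antipodal pairs plus one transversal) for which your argument is closer to complete. Either way the last paragraph of your proposal is a gap, not a routine verification, and should not be stated in the conditional way it is.
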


Note that for 2-star-forests,  one can decompose the edges of the form $v_iv_{i+t}$ into $\lceil n/4 \rceil$ matchings of size at most 2, which provides the construction for \Cref{thm:2-forest}. Similarly, if the goal is to decompose $K_n$ into $k$-star-forests, then one can make use of $S_1,\dots, S_t$ plus $\lceil \frac{n}{2k} \rceil$ matchings of size at most $k$. Pach, Saghafian and Schider conjectured that this construction is best possible.
\begin{conj}[\cite{pach2023decomposition}]\label{conj:star}
For any $n\ge k\ge 2$, $F_k(n)\ge \l\lceil\frac{(k+1)n}{2k} \r\rceil$.
\end{conj}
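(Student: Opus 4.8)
The plan is to prove the equivalent statement that every decomposition of $K_n$ into $k$-star-forests $F_1,\dots,F_m$ has $m\ge \frac{(k+1)n}{2k}$. Writing $c_j$ for the number of star components of $F_j$ and $i_j$ for the number of vertices isolated in $F_j$, each leaf accounts for exactly one edge, so $|E(F_j)|=n-c_j-i_j$; summing over $j$ gives the identity
\[
m=\frac{n-1}{2}+\frac1n\sum_{j=1}^m(c_j+i_j).
\]
Hence the conjecture is equivalent to the ``waste'' bound $W:=\sum_j(c_j+i_j)\ge \frac{n^2}{2k}+\frac n2$. Since this is an identity, counting alone cannot suffice, and one must exploit (i) the constraint $c_j\le k$ and (ii) the rigidity of near-optimal star-forest decompositions. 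A guiding observation: if a vertex $v$ is the centre of a spanning star in some $F_j$, then $v$ is isolated in every other forest and contributes $m-1$ to $W$; more generally, hubs of very large stars are very wasteful, so when $m$ is close to $n/2$ the decomposition must consist almost entirely of ``efficient'' forests with $c_j\le 2$ and $i_j$ small, plus a small number of low-degree ``cleanup'' forests.

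First I would set up a dichotomy. Either the waste contributed by the non-efficient forests is already enough to reach the target --- which happens, after bookkeeping, once there are about $n/(2k)$ forests with $i_j$ of order $n$ --- or else all but $o(n)$ of the forests are efficient. In the former case we are done by the displayed identity. In the latter, delete the few non-efficient forests; the remaining $p=m-o(n)$ efficient forests (each with $c_j\in\{1,2\}$) form a star-forest decomposition of $K_n$ minus a sparse graph. Here I would invoke a \emph{stability} version of \Cref{thm:broken double star}: a family of about $n/2$ star-forests covering all but $o(n^2)$ edges of $K_n$ must be isomorphic to a broken double star with only a few edges added or removed. In particular, on the vertex labelling $\{v_1,\dots,v_n\}$ the set $E_0$ of edges left uncovered by the efficient forests is, up to $o(n^2)$ exceptions, the perfect matching $\{v_iv_{i+n/2}\}$.

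Granting this, the count closes. Every cleanup forest lies inside $E_0$, since all other edges are already used; and since $E_0$ is essentially a matching, a $k$-star-forest contained in it is a union of at most $k$ disjoint edges, hence has at most $k$ edges. Thus the cleanup forests must cover the $\approx n/2$ matching edges $k$ at a time, so there are at least $n/(2k)-o(n)$ of them, giving $m\ge \frac n2+\frac n{2k}-o(n)$. To recover the exact bound $\l\lceil\frac{(k+1)n}{2k}\r\rceil$ one would replace the stability step by an exact analysis --- using the uniqueness in \Cref{thm:broken double star} as the base case and tracking precisely how each added non-matching edge forces extra waste --- and then optimise the constants, much as is done for $k=2$ in \Cref{thm:2-forest}.

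The step I expect to be the genuine obstacle is the stability version of \Cref{thm:broken double star}. That theorem is an \emph{exact} uniqueness statement about decompositions of size precisely $n/2+1$, and uniqueness results of this flavour are typically fragile: permitting even a linear number of extra forests may open up decompositions with a completely different shape, in which the uncovered remainder is far from a matching and can be covered with far fewer than $k$ edges per cleanup forest. If so, the dichotomy collapses in its hard branch and the method yields nothing beyond the essentially trivial bound $m\ge n/2$ there. Before committing to this plan I would therefore stress-test small cases --- in particular $k=3$ with $n$ a small multiple of $27$ --- to see whether a hybrid construction, part broken double star and part something else, already beats $\frac{(k+1)n}{2k}$; if it does, the conjecture as stated must be revised.
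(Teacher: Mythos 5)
There is no way to complete this plan, because the statement you are trying to prove is false, and disproving it is precisely the point of this paper: \Cref{conj:star} is quoted here only as the conjecture of Pach, Saghafian and Schnider that \Cref{thm:3-star,thm:4-star} refute. For $k=3$ the conjectured bound is $\lceil 2n/3\rceil$, but the paper constructs a decomposition of $K_{27}$ into $15$ three-star-forests (versus the conjectured $18$) and blows it up via \Cref{lem:blowup} to get $F_3(n)=5n/9$ for all multiples of $27$; for $k\ge 4$ it shows $F_k(n)=\frac n2+2$ whenever $n>2k$ and $n\equiv 4\pmod{12}$, far below $\frac n2+\frac n{2k}$. So while your opening identity $m=\frac{n-1}{2}+\frac1n\sum_j(c_j+i_j)$ is correct, no amount of structural analysis can push the bound to $\frac{(k+1)n}{2k}$ for $k\ge 3$.

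The concrete step that fails is exactly the one you flagged: the hoped-for stability version of \Cref{thm:broken double star} is false, and your fallback intuition was right. Uniqueness at $m=\frac n2+1$ does not persist even at $m=\frac n2+2$: the $K_{16}$ decomposition of \Cref{subsec:16} uses $10=\frac n2+2$ forests whose root-hypergraph consists of $4$-element and $2$-element center sets, nothing resembling a broken double star, and the edges left for the ``cleanup'' forests are not close to a perfect matching, so the count of $n/(2k)$ cleanup forests never materializes. Similarly, the equality analysis at the start of \Cref{subsec:n=27} shows that for $k=3$ an extremal decomposition uses \emph{no} $2$-star-forests at all, so near-optimal structure for $k\ge 3$ is genuinely different from the $k=2$ picture behind \Cref{thm:2-forest}. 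Your closing suggestion --- stress-test $k=3$ with $n$ a small multiple of $27$ --- is exactly what the paper does, and the outcome is that the conjecture must be revised: it is correct only at $k=2$, and the right answers for $k\ge3$ are those of \Cref{thm:3-star,thm:4-star}.
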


In this paper, we determine the value of $F_k(n)$ for every $k\ge 3$ and infinitely many integers $n$, which disproves \Cref{conj:star}.

\begin{thm}\label{thm:3-star}
$F_3(n)=5n/9$ when $n$ is positive and a multiple of 27.
\end{thm}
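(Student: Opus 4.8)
The plan is to prove the two inequalities $F_3(n)\le 5n/9$ and $F_3(n)\ge 5n/9$ for $n$ a multiple of $27$, the first by an explicit construction and the second by a counting argument. A common setup for both: given any decomposition of $K_n$ into star-forests, orient each edge from the center of its covering star toward the leaf. Then in every $3$-star-forest at most three vertices have positive out-degree (the centers), every other non-isolated vertex has in-degree exactly $1$, a forest with $c$ stars and $z$ isolated vertices covers exactly $n-c-z$ edges, and summing over a decomposition $\F_1,\dots,\F_M$ of $K_n$ yields the identity $\sum_i\big((c_i-1)+z_i\big)=(n-1)\big(M-\tfrac n2\big)$, with $c_i\le 3$ and $z_i\ge 0$.

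\emph{Upper bound.} The construction underlying \Cref{conj:star}---the broken double star plus $\lceil n/(2k)\rceil$ cleanup matchings of size $k$---is optimal only \emph{conditional} on starting from the broken double star, whose uncovered remainder is a near-perfect matching; since any $3$-star-forest inside a matching has at most three edges, clearing that remainder costs $\lceil n/6\rceil$ extra forests, yielding $\lceil 2n/3\rceil$. To reach $5n/9$ the plan is instead to build, directly and for $n$ a multiple of $27$, an explicit periodic (or Cayley-graph-based) family of $5n/9$ $3$-star-forests in which almost every member is near-spanning---three stars of roughly $n/3$ leaves each, with only about $n/10$ isolated vertices and hence about $9n/10$ covered edges---the point being to avoid ever reducing to a sparse leftover graph that is expensive to clean up; divisibility by $27$ is used to make the periods of the pattern divide evenly. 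I would first settle $n=27$, where $5n/9=15$ coincides with the Akiyama--Kano bound $\lceil n/2\rceil+1$, so the content is exactly to exhibit $15$ pairwise edge-disjoint $3$-star-forests decomposing $K_{27}$; then lift to general $27\mid n$ by a blow-up, which additionally requires decomposing each bipartite ``cross'' graph $K_{27,27}$ between blocks into $3$-star-forests.

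\emph{Lower bound.} From the identity above, $c_i\ge 1$ gives $\sum_i z_i\le(n-1)(M-\tfrac n2)$ and hence $M\ge n/2$, and using $c_i\le 3$ together with \Cref{thm:broken double star} improves this only by an additive constant; indeed the LP relaxation of the decomposition problem has value $\binom n2/(n-1)=n/2$ (the uniform edge weighting $1/(n-1)$ is dual-optimal by vertex-transitivity), so the constant $5/9$ must come from a genuinely integral argument. The mechanism to exploit is that efficiency forces waste: a forest of deficiency $(n-1)-e_i=0$ is a single spanning star and makes its center isolated in \emph{every} other forest, and more generally a low-deficiency forest is dominated by one large star whose center is isolated in many forests, while the forests that contain many isolated vertices must themselves have large deficiency. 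I would make this quantitative by classifying the forests by $(c_i,z_i)$, running a discharging argument that transfers deficiency credit between a forest and the forests isolating its big centers, and---to control the near-extremal decompositions, which should be perturbations of broken double stars---invoking a stability strengthening of the theorem of Anti{\'c}, Gli{\v{s}}i{\'c} and Milivoj{\v{c}}evi{\'c} (\Cref{thm:broken double star}) valid for decompositions into somewhat more than $\lceil n/2\rceil+1$ star-forests.

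\emph{Main obstacle.} The hard part is the lower bound: pushing the constant from the free $1/2$ up to exactly $5/9$. The obstruction is that the near-extremal configurations are neither unique nor rigid---there is essentially a continuum of near-optimal decompositions interpolating between the broken double star and the new construction---so the discharging must survive large-scale rearrangements, and the real technical work should be establishing the right quantitative stability version of \Cref{thm:broken double star}, namely that every decomposition of $K_n$ into few star-forests is reachable from a broken double star by a bounded number of local moves each of bounded, trackable cost. The construction, although it is the part that actually disproves \Cref{conj:star}, should by comparison amount to choosing the periodic pattern correctly and verifying it.
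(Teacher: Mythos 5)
Your upper-bound outline matches the paper's in essence: the paper first exhibits an explicit decomposition of $K_{27}$ into $15$ $3$-star-forests (a symmetric construction on $\mathbb{F}_3^3$, which is the ``Cayley-type'' structure you gesture at) and then lifts to all multiples of $27$ by a blow-up lemma (\Cref{lem:blowup}). You correctly spot that $n=27$ is the critical case because $5n/9$ then coincides with $\lceil n/2\rceil+1$, and that the blow-up needs care in handling the cross-edges between blocks; in the paper this care is encoded in the hypothesis $m\le n-2$, which by \Cref{prop:no-iso} forces every vertex to be a center at least once, so the cross-edges and intra-block edges can be absorbed into the lifted forests.

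Your lower-bound plan, however, diverges substantially from the paper's and, as you yourself suspect, is where it would break down. The paper does \emph{not} need any stability refinement of \Cref{thm:broken double star}, nor discharging, nor an LP-duality framing; it never analyzes near-extremal configurations at all. Instead it takes a minimal counterexample, introduces the root-hypergraph $B_0$ (hyperedges $=$ sets of centers of each forest), notes that $B_0$ has no isolated vertices and no singleton hyperedges, and then proves a short structural lemma (\Cref{Lem:3}): no hyperedge contains two degree-$1$ vertices, and a degree-$2$ vertex cannot lie in two hyperedges each of which also contains a degree-$1$ vertex. Feeding this into a bipartite incidence graph between $V_1$ (degree-$1$ vertices) and $V_{\ge 2}$ gives $2p_1-r\le p_2+\sum_{j\ge3}jp_j$, and a fixed linear combination with $\sum p_j=n$ and $\sum jp_j=3m-r$ yields $5n-9m+2r\le-\sum_{j\ge3}(2j-5)p_j\le 0$, a clean contradiction with $m<5n/9$. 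Your $(c_i,z_i)$-bookkeeping is the ``forest-side'' dual of the paper's ``vertex-side'' degree count in $B_0$ (both are equivalent ways of tallying center incidences), but your proposal lacks the analogue of \Cref{Lem:3}, which is the real source of the $5/9$; without it you are forced into the much heavier machinery of discharging plus a stability theorem that is not in the literature and is not actually needed. In short: your construction side is on track, but your lower-bound route would stall, whereas the paper's root-hypergraph counting argument closes it in about a page.
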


\begin{thm}\label{thm:4-star}
For $k\ge 4$, $F_k(n)=\frac{n}{2}+2$ when $n>2k$ and $n\equiv 4 \pmod{12}$.
\end{thm}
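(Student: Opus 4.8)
The plan is to prove the two inequalities $F_k(n)\ge \frac n2+2$ and $F_k(n)\le \frac n2+2$ separately; the first is short, and the construction behind the second is the main work. For the lower bound, suppose $K_n$ is decomposed into $N$ $k$-star-forests. Every $k$-star-forest is in particular a star-forest, so by the theorem of Akiyama and Kano the decomposition has $N\ge \lceil n/2\rceil+1=\frac n2+1$ parts (note $n$ is even since $n\equiv 4\pmod{12}$). If $N=\frac n2+1$, then \Cref{thm:broken double star}, applied with $t=\frac n2$, forces the decomposition to be a broken double star decomposition; but one of the parts of any broken double star decomposition is a perfect matching of $K_n$ (the edges $v_iv_{i+n/2}$), which is a star-forest with $\frac n2$ components. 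Since $n>2k$ gives $\frac n2>k$, this part is not a $k$-star-forest, a contradiction. Hence $N\ge\frac n2+2$.

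For the upper bound it suffices to decompose $K_n$ into $\frac n2+2$ $4$-star-forests, since any $4$-star-forest is a $k$-star-forest for every $k\ge 4$. One natural approach starts from the broken double star on $v_1,\dots,v_n$ (indices mod $n$): the $2$-star-forests $S_i=S(v_i;v_{i+1},\dots,v_{i+n/2-1})\cup S(v_{i+n/2};v_{i+n/2+1},\dots,v_{i-1})$, $1\le i\le \frac n2$, cover every edge of $K_n$ except the perfect matching $M=\{v_iv_{i+n/2}\}_i$. The obstruction is precisely $M$: a matching cannot be packed into boundedly many $k$-star-forests, so it must be absorbed into the $S_i$ by a local exchange. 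Given $i$ and a difference $d\in\{1,\dots,\frac n2-1\}$, the leaves $v_{i+d}$ (of the first star of $S_i$) and $v_{i+n/2+d}$ (of the second star) are joined by the matching edge $v_{i+d}v_{i+n/2+d}$; deleting the two leaf edges $v_iv_{i+d}$ and $v_{i+n/2}v_{i+n/2+d}$ and adding this matching edge turns $S_i$ into a $3$-star-forest covering one edge of $M$, and repeating the move with a second difference makes it a $4$-star-forest covering two edges of $M$. Performing these moves on a suitable set of $\frac n4$ indices $i$ (here $4\mid n$ is used), with differences chosen so that every edge of $M$ is produced exactly once, yields $\frac n2$ star-forests that together cover $E(K_n)$ except for the $n$ ``traded-away'' leaf edges.

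It then remains to cover those $n$ leftover edges with two more $4$-star-forests. Each edge traded away at index $i$ is incident to one of the $\frac n2$ ``active'' centers $v_i,v_{i+n/2}$; the idea is to choose the differences so that the other endpoint of every traded edge lies in a fixed bounded ``collector'' set $W$ of vertices disjoint from the active centers and closed under $v_i\mapsto v_{i+n/2}$ (consistent with the exchange move, since the new matching edge then joins two collectors). The leftover graph is then a union of $O(1)$ stars centered at the collectors, together with the few edges of $M$ not yet placed. Splitting $W=W_1\sqcup W_2$ and routing, for each active center, its two traded edges to one collector of $W_1$ and one of $W_2$ (possible because any arc of $\Z_n$ of length $\frac n2-1$ meets both halves), one gets $4$-star-forests $T_1,T_2$: inside $T_j$ the at most four stars are centered at the vertices of $W_j$ with distinct active centers as leaves, hence are vertex-disjoint. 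Together with $S_1,\dots,S_{n/2}$ this is a decomposition of $K_n$ into $\frac n2+2$ $4$-star-forests.

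The main obstacle is the bookkeeping in this construction: one must pick the active indices, the two differences for each, the set $W$, and the split $W_1,W_2$ simultaneously so that (i) $M$ is produced exactly once and no two traded edges coincide, (ii) every exchange move genuinely frees its two leaves and creates no collision — in particular between an absorbed matching edge and a collector star — so that each modified $S_i$ is an honest star-forest with at most four pairwise vertex-disjoint stars, and (iii) the leftover really splits into only two $4$-star-forests rather than $\Theta(n/k)$ of them. There is essentially no slack: in any decomposition of $K_n$ into $\frac n2+2$ parts one has $\sum_i\bigl(n-|E(T_i)|\bigr)=\frac52 n$, so since a $4$-star-forest has at most four nontrivial components, almost every forest must carry an isolated vertex, and the stars must be placed very precisely. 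This rigidity is exactly why the statement is confined to the residue class $n\equiv 4\pmod{12}$: the congruence is what makes all the relevant counts (the number $\frac n4$ of active indices, the number and spacing of collectors, the matching edges absorbed per forest, and the sizes of the collector stars) come out integral. Verifying that such explicit choices go through for every $n\equiv 4\pmod{12}$ with $n>2k$ is the bulk of the argument.
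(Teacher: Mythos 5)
Your lower bound is correct and coincides with the paper's: Akiyama--Kano gives $F_k(n)\ge n/2+1$ for even $n$, and if $F_k(n)=n/2+1$ then \Cref{thm:broken double star} forces a broken double star decomposition, one of whose parts is the perfect matching $\{v_iv_{i+n/2}\}$, a star-forest with $n/2>k$ components; contradiction.

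The upper-bound sketch, however, has a gap that is not a matter of bookkeeping. In your exchange move at an active index $i$ with difference $d$, the matching edge $v_{i+d}v_{i+n/2+d}$ is absorbed into $S_i$ and the two freed leaf edges are $v_iv_{i+d}$ and $v_{i+n/2}v_{i+n/2+d}$; their non-center endpoints are exactly the two endpoints of the absorbed matching edge. You require every edge of $M$ to be produced exactly once, and $M$ is a perfect matching, so over all active $i$ and differences $d$ the absorbed matching edges are all $n/2$ edges of $M$ and their endpoints are all $n$ vertices of $K_n$. Hence the non-center endpoints of the $n$ traded-away leaf edges sweep out the entire vertex set, and they cannot all lie in a ``collector'' set $W$ of bounded size (let alone one disjoint from the $n/2$ active centers). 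Equivalently: since $W$ is closed under $v\mapsto v_{+n/2}$, the matching edges with both endpoints in $W$ number only $|W|/2\le 4$, yet each active index must route its absorbed matching edges through $W$, so at most $4$ distinct matching edges can ever be absorbed; this fails for every $n\ge 16$. The two residual forests $T_1,T_2$ with $O(1)$ centers therefore cannot collect the $n$ leftover edges, and the construction as described cannot close. (Your explanation of the congruence also does not hold up: your scheme only invokes $4\mid n$, which is equally true for $n\equiv 0$ or $8\pmod{12}$.)

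The paper does not start from the broken double star. For $n=12m+4$ it partitions $V(K_n)$ into $3m+1$ blocks of size four, $A_0,\dots,A_m$, $B_0,\dots,B_{m-1}$, $C_0,\dots,C_{m-1}$, builds a root-hypergraph with exactly $6m+4=n/2+2$ hyperedges (four-element hyperedges $X_{ki}$, $B_k$, $C_k$ plus four two-element hyperedges $Y_0,Y_1,Z_0,Z_1$ inside $A_0$ and $A_m$), and then assigns every edge of $K_n$ to one hyperedge by explicit ``short-distance'', ``long-distance'' and ``boundary'' rules, checked by direct inspection and a global edge count. To complete your proof you would need a comparably explicit construction; the exchange-on-the-broken-double-star approach, as written, provably cannot supply one.
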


In fact, by \Cref{thm:broken double star} we know that $F_k(n)\ge \frac{n}{2}+2$ when $n> 2k$, and it turns out that this lower bound is tight for $k=4$ when $n=12m+4 (m\ge 1)$. Recall the definition of $k$-star-forest, this actually proves \Cref{thm:4-star} for $k\ge4$.

The rest of this paper is structured as follows. Section~\ref{sec:3-star} consists of three subsections where in Subsection~\ref{subsec:3-star-lowerbound} we prove the lower bound for \Cref{thm:3-star} and in Subsections~\ref{subsec:n=27} and ~\ref{subsec:blowup} we present constructions that match the lower bound. In Section~\ref{sec:4-star}, we prove Theorem~\ref{thm:4-star} for $k=4$ by first presenting a construction for $n=16$ in Subsection~\ref{subsec:16}, and then extend it to $n=12m+4$ in the rest of Sections~\ref{sec:4-star}.

\section{Decomposition into 3-star forests}\label{sec:3-star}
In this section, we prove Theorem~\ref{thm:3-star}.

\begin{defi}
Let $\mathcal{C}$ be a collection of star-forests. The \textbf{root-hypergraph} $B$ of $\mathcal{C}$ is defined as follows: every star-forest $S$ in the decomposition corresponds to a unique hyperedge $e_S$ in $B$, such that vertices in $e_S$ are exactly centers of the stars in $S$.  
\end{defi}
For example, the root-hypergraph of the broken double star described earlier has edges $\{v_i,v_{i+t}\}$ for each $1\le i\le t$ and $\{v_1,~v_2,~\dots,~v_t\}$. This concept is useful in our following proofs, and was used by Pach, etc. in~\cite{pach2023decomposition} to prove their result for 2-star-forests (Theorem 4), although they did not name it. 

\begin{prop}\label{prop:no-iso}
Let $\C$ be a collection of star-forests decomposing $K_n$ and let $B$ be its root-hypergraph. If $|\C|<n-1$, then $B$ has no isolated vertex.
\end{prop}

\begin{proof}
Suppose there exists a vertex $v$ not contained in any hyperedges in $B$. By definition it means that $v$ is not a center for any star-forests in $\C$, which implies that each star-forest contain at most one edge containing $v$. Thus, to cover all $(n-1)$ edges containing $v$, there are at least $n-1$ star-forests in $\C$. 
\end{proof}

	
\subsection{Lower bounds}\label{subsec:3-star-lowerbound}
In this subsection, we show that $F_3(n)\ge \frac{5n}{9}$ for all $n\ge 3$. Suppose (for contradiction that) there exists a minimal $n\ge3$ such that $F_3(n) < \frac{5n}{9}$, giving rise to a decomposition of $K_n$ by $F_3(n)$ 3-star-forests.  Let $\C_0$ be the corresponding collection of 3-star-forests and let $B_0$ be its root-hypergraph. Note that $|\C_0|<5n/9<n-1$, so by \Cref{prop:no-iso} $B_0$ has no isolated vertex. Moreover, one can show that every hyperedge in $B_0$ contains at least two vertices. Indeed, if $\C_0$ contains a 1-star-forest $S$, then by deleting the center of $S$ from $K_n$ as well as $S$ itself from $\C_0$, we have $F_3(n-1)\le F_3(n)-1 < \frac{5(n-1)}{9}$, contradicting with the minimality of $n$.
	
Let $m=F_3(n)$ and $r$ be the number of hyperedges in $B_0$ of size 2. In other words, $r$ is the number of $2$-star-forests in $\C_0$. Thus there are $(m-r)$ hyperedges in $B_0$ of size 3, and we have
$$
\sum_{P\in V(B_0)}deg_{B_0}(P)=\sum_{e\in B_0}|e|=2r+3(m-r)=3m-r,
$$
where $deg_{B_0}(P)$ denotes the degree of $P$ in $B_0$, i.e. the number of hyperedges in $B_0$ containing $P$.
	
Since $B_0$ has no isolated vertices, every vertex has degree $\ge1$. Let us focus on vertices with degree 1 in $B_0$, and denote the set of these vertices by $V_1$, the set of other vertices by $V_{\ge 2}$.
\begin{lem}\label{Lem:3}
\begin{itemize}
    \item[(1)] No hyperedge in $B_0$ contain two vertices in $V_1$.
    \item[(2)] For every vertex of degree 2 in $B_0$, the two hyperedges containing it will not both contain some vertex in $V_1$.
\end{itemize}
\end{lem}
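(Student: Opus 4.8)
The plan is to derive a contradiction with the minimality of $n$ by finding a vertex whose incident edges cannot all be covered with the available $3$-star-forests. The key structural fact is that a vertex $v\in V_1$ is the center of a star in exactly one $3$-star-forest of $\C_0$; call it $S_v$. In every other $3$-star-forest, $v$ appears only as a leaf, hence is incident to at most one edge there. So every edge at $v$ other than those of the single star centered at $v$ in $S_v$ must be covered one-per-forest by the forests where $v$ is a leaf. This ``budget'' argument is the engine behind both parts.

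For part (1), suppose a hyperedge $e_S$ of $B_0$ contains two distinct vertices $u,v\in V_1$. Then $S$ is the unique forest in which $u$ is a center and the unique forest in which $v$ is a center, and within $S$ the stars centered at $u$ and at $v$ are vertex-disjoint, so in particular the edge $uv$ is covered by neither of them; $uv$ must therefore be covered by some other forest $S'$. But in $S'$ neither $u$ nor $v$ is a center (as $u,v$ have degree $1$ in $B_0$), so the edge $uv\in S'$ would be an edge of a star in $S'$ with neither endpoint a center — impossible. Hence the edge $uv$ is never covered, contradicting that $\C_0$ decomposes $K_n$.

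For part (2), let $w$ be a vertex of degree $2$ in $B_0$, lying in hyperedges $e_{S}$ and $e_{S'}$, and suppose both $S$ and $S'$ also have a center in $V_1$ — say $u\in e_S\cap V_1$ and $u'\in e_{S'}\cap V_1$ (with $u\neq u'$ by part (1), unless $u=u'$, but a $V_1$-vertex lies in only one hyperedge, so indeed $u\neq u'$). Consider the vertex $u$: it is a center only in $S$, so every edge at $u$ except those in the one star centered at $u$ in $S$ must be covered by forests where $u$ is a leaf, at most one edge each. I would count: $u$ has $n-1$ incident edges; the star centered at $u$ in $S$ covers at most $\deg_S(u)$ of them where $\deg_S(u)\le n-1$, but crucially the remaining forests number only $m-1 < 5n/9 - 1$, so the star at $u$ must have very large degree, forcing $S$ to be "essentially" a single big star at $u$ — and similarly $S'$ is essentially a single big star at $u'$. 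Then the leaf-set of $u$'s big star and of $u'$'s big star must each be almost all of $V\setminus\{u\}$ resp. $V\setminus\{u'\}$, while $w$ being a center in both $S$ and $S'$ must have its own $n-1$ edges covered; combining the near-spanning stars at $u$, at $u'$, and the stars at $w$ inside $S,S'$ overfills the two forests (a $3$-star-forest has only $3$ components) and leaves the edge $uu'$, or some edge at $w$, uncovered. Making this counting precise — pinning down exactly how large $\deg_S(u)$ must be and checking the star at $w$ inside $S$ cannot coexist with it — is the main obstacle, and is where the bound $m<5n/9$ and the absence of $1$-star-forests and isolated vertices get used.

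I expect the hard part to be part (2): one must carefully track, inside a single $3$-star-forest, the interaction between a (forced large) star at a $V_1$-vertex, a star at the degree-$2$ vertex $w$, and the at-most-one leftover component, and show the two forests $S,S'$ together cannot cover all edges at $u$, at $u'$, and at $w$ simultaneously; part (1) is a short clean argument by comparison.
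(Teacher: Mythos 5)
Your proof of part (1) is correct and is essentially the paper's proof: the edge between the two $V_1$-vertices cannot lie in the star-forest corresponding to the common hyperedge (both endpoints are centers there), and cannot lie in any other star-forest (neither endpoint is ever a center elsewhere).

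Part (2), however, has a genuine gap: you sketch a counting argument about forced-large stars at the $V_1$-vertices $u$, $u'$, and you explicitly say you cannot make it precise. You would not need the bound $m<5n/9$, nor any counting, and I do not see how your overfilling argument is supposed to close. The paper's argument is much shorter and is of the same kind as part (1): exhibit a specific uncovered edge. With your notation, the edge $uu'$ must be covered somewhere, and since $u,u'\in V_1$ are centers only in $S$ and $S'$ respectively, $uu'$ must be covered either by the star centered at $u$ in $S$ or by the star centered at $u'$ in $S'$. Say, without loss of generality, the former — so $u'$ is a leaf of $u$'s star in $S$. Now consider the edge $wu'$. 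It cannot be covered by $S$: in $S$, the vertex $u'$ already belongs to the star centered at $u$, while $w$ is the center of a different star (both $u,w\in e_S$, so they center disjoint stars), so $w$ and $u'$ lie in different components of $S$. It cannot be covered by $S'$: both $w$ and $u'$ lie in $e_{S'}$, so both are centers in $S'$, and a star-forest never contains an edge between two of its centers. And it cannot be covered by any other star-forest, since $w$ is a center only in $S,S'$ and $u'$ is a center only in $S'$. So $wu'$ is uncovered, contradiction. Replacing your counting sketch by this short case analysis closes the gap.
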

\begin{proof}
\begin{itemize}
    \item [(1)] If there exists a hyperedge $e$ in $B_0$ containing $P, Q\in V_1$, then the star-forest corresponding to $e$ doesn't cover the edge $PQ$, since $P$ and $Q$ are both centers in that star-forest. Further, since $P,Q\in V_1$, no other star-forests use $P$ or $Q$ as center, thus $PQ$ is also not covered by any other star-forests. This contradicts with $\C_0$ decomposing $K_n$.
    \item [(2)] Suppose that a degree 2 vertex $P$ in $ B_0$ is connected to $Q_1, Q_2\in V_1$ by 2 hyperedges in $B_0$ respectively, and that $Q_1$ is center of the star containing $Q_1Q_2$ in the decomposition, without loss of generality. Then the star-forest corresponding to the hyperedge containing $P$ and $Q_1$ cannot contain edge $PQ_2$ in $K_n$. Further, the star-forest corresponding to the hyperedge connecting $P$ and $Q_2$ cannot contain edge $PQ_2$ either. Thus $PQ_2$ is not covered by any star-forests, a contradiction.
\end{itemize}
\end{proof}
	
Let $p_j$ be the number of degree-$j$ vertices in $B_0$. Then $|V_1|=p_1$. Calculate the number and degree sum of vertices we have:
\begin{equation}\label{eq:3-1}
        \sum_{j\geqslant1}p_j=n;
\end{equation}
\begin{equation}\label{eq:3-2}
		\sum_{j\geqslant1}j\cdot p_j=3m-r.
\end{equation}
	
Furthermore, consider the bipartite graph $B_1$ between $V_1$ and $V_{\ge 2}$ such that for all $P\in V_1$ and $Q\in V_{\ge2}$, edge $PQ$ is in $B_1$ if and only if $P, Q$ is connected by a hyperedge in $B_0$ (see \Cref{figure:B1} for an example). By \Cref{Lem:3}(1), every vertex $P\in V_1$ has degree 1 or 2 in $B_1$, which is determined by the size of the hyperedge connecting $P$ in $B_0$. Thus the number of edges in $B_1$ is at least $r+2(p_1-r)=2p_1-r$. Note that the degree of every vertex $Q\in V_{\ge 2}$ in $B_1$ is controlled by its degree in $B_0$: $deg_{B_1}(Q)\le j$ when $deg_{B_0}(Q)=j\ge3$, and $deg_{B_1}(Q)\le 1$ when $deg_{B_0}(Q)=2$ by \Cref{Lem:3}(2). Thus by double counting the number of edges in $B_1$ we have
$$
2p_1-r\leqslant p_2+\sum_{j\geqslant3}j\cdot p_j.
$$
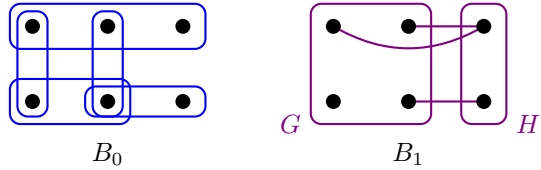
\begin{figure}[h]
		\centering
		\begin{tikzpicture}
			\node foreach \x in {1,2,3}foreach \y in{1,2}[circle,fill,inner sep=2] at (\x,\y){};
			\foreach \x in {1,2}{
				\draw[blue,thick,rounded corners] (\x-0.2,0.8)rectangle(\x+0.2,2.2);
				\draw[blue,thick,rounded corners] (\x-0.3,0.6+0.1*\x)rectangle(\x+1.3,1.4-0.1*\x);
			}
			\draw[blue,thick,rounded corners](0.7,1.7)rectangle(3.3,2.3);
			\node at(2,0.3) {$B_0$};
			
			\draw[violet,thick](7,1)edge(6,1) (7,2)edge(6,2) (7,2)edge[bend left](5,2);
			\node foreach \x in {1,2,3}foreach \y in{1,2}[circle,fill,inner sep=2] at (\x+4,\y){};
			\draw[violet,thick,rounded corners] (6.7,2.3)rectangle(7.3,0.7)node[right]{$H$};
			\draw[violet,thick,rounded corners] (6.3,2.3)rectangle(4.7,0.7)node[left]{$G$};
			
			\node at(6,0.3) {$B_1$};
		\end{tikzpicture}
		\caption{An example of $B_0$ and its corresponding $B_1$.}
            \label{figure:B1}
\end{figure}
We add $3$ times \Cref{eq:3-2} and minus $5$ times \Cref{eq:3-1} to the inequality above, resulting in
$$
5n-9m+2r\leqslant-\sum_{j\geqslant3}(2j-5)p_j\leqslant0,
$$
contradicts with $m<\frac{5n}{9}$.
	
\subsection{The construction for $n=27$}\label{subsec:n=27}
From the inequality above, the equality holds only when $r=p_j(j\geqslant3)=0$, that means no 2-star-forest are used and each vertex acts as a center in some star-forest at most twice. If $F_3(n)=5n/9$, then $n$ must be a multiple of $9$. For $n=9,18$, it is easy to check $(n-3)\cdot \frac{5n}{9}<\binom{n}{2}$, hence it is impossible to decompose $K_n$ into $\frac{5n}{9}$ 3-star-forests. For $n=27$, we discover the following construction (see \Cref{figure:27} for an illustration).
	
Let $\mathbb{F}^3_3=\{(i,j,k):i,j,k\in\mathbb{F}_3\}$ be the vertex set of $K_{27}$ where $\mathbb{F}_3=\{0,1,2\}$ is the finite field of size 3. Let $\{(i,j,0),(i,j,1),(i,j,2)\}$ be the centers of the 3-star-forest $S_{ij}$ consisting of stars
\begin{align*}
		S ((i,j,0); &(i,j+1,0),(i+1,j,0),(i+1,j-1,0),(i-1,j-1,0),\\
		&(i,j-1,1),(i-1,j,1),(i-1,j+1,1),(i+1,j+1,1),\\
            &(i,j-1,2), (i-1,j,2), (i-1,j+1,2), (i+1,j+1,2) )\\
S ((i,j,1); &(i+1,j,1), (i-1,j-1,1),\\
&(i+1,j,2), (i-1,j-1,2),\\
&(i-1,j,0), (i+1,j+1,0) )\\
S ((i,j,2); &(i,j+1,2), (i+1,j-1,2),\\
&(i,j+1,1), (i+1,j-1,1),\\
&(i,j-1,0), (i-1,j+1,0)).
\end{align*}

Let $X_j=\{(0,j,1), (1,j,1), (2,j,1)\}$ be the centers of the 3-star-forest $S_{X_j}$ consisting of stars
\begin{align*}
	S ((i,j,1);~ &(i+1,j-1,1), (i,j+1,1),\\
	&(i+1,j-1,2), (i,j+1,2), (i,j,2),\\
	&(i-1,j+1,0), (i,j-1,0), (i,j,0) ),~~~\forall~ 0\le i\le 2.
\end{align*}

Let $Y_i=\{(i,0,2),(i,1,2),(i,2,2)\}$ be the centers of the 3-star-forest $S_{Y_i}$ consisting of stars
\begin{align*}
		S ((i,j,2); &(i+1,j,2),(i-1,j-1,2),\\
		&(i+1,j,1),(i-1,j-1,1),\\
		&(i-1,j,0),(i+1,j+1,0),(i,j,0) ),~~~\forall 0\le j\le 2.
\end{align*}

\begin{figure}[h]
		\centering
		\begin{tikzpicture}[scale=.8]
			\draw[gray,-latex](1,0,1)--(3.3,0,1)node[above]{$j$};
			\draw[gray,-latex](1,0,1)--(1,2.3,1)node[left]{$k$};
			\draw[gray,-latex](1,0,1)--(1,0,3.6)node[left]{$i$};
			\foreach \x in {1,2,3}\foreach \y in {1,2,3}{
				\fill (\x,0,\y)circle[radius=2pt];
				\fill (\x,1,\y)circle[radius=2pt];
				\fill (\x,2,\y)circle[radius=2pt];
				\draw[blue,thick] (\x,0,\y)--(\x,2,\y);
				\draw[blue,thick] (\x,1,1)--(\x,1,3);
				\draw[blue,thick] (1,2,\y)--(3,2,\y);}
			\node[align=center] at(2,-1,2) {root-hypergraph};
		\end{tikzpicture}
		\begin{tikzpicture}[scale=.9]
			\foreach \x in {1,2,3}\foreach \y in {1,2,3}{
				\fill (\x,0,\y)circle[radius=1.5pt];
				\fill (\x,1,\y)circle[radius=1.5pt];
				\fill (\x,2,\y)circle[radius=1.5pt];}
			\begin{scope}[every path/.style={blue,arrows={>-Stealth[fill=white]}}]\path
				(2,0,2)edge(3,0,2)	(2,0,2)edge(2,0,3)	(2,0,2)edge(1,0,3)	(2,0,2)edge(1,0,1)
				(2,0,2)edge(1,1,2)	(2,0,2)edge(2,1,1)	(2,0,2)edge(3,1,1)	(2,0,2)edge(3,1,3)
				(2,0,2)edge(1,2,2)	(2,0,2)edge(2,2,1)	(2,0,2)edge(3,2,1)	(2,0,2)edge(3,2,3);
			\end{scope}
			\begin{scope}[every path/.style={purple,arrows={>-Stealth[fill=white]}}]\path
				(2,1,2)edge(2,1,3)	(2,1,2)edge(1,1,1)	(2,1,2)edge(2,2,3)	(2,1,2)edge(1,2,1)	(2,1,2)edge(2,0,1)	(2,1,2)edge(3,0,3);
			\end{scope}
			\begin{scope}[every path/.style={olive,arrows={>-Stealth[fill=white]}}]\path
				(2,2,2)edge(3,2,2)	(2,2,2)edge(1,2,3)	(2,2,2)edge(3,1,2)	(2,2,2)edge(1,1,3)	(2,2,2)edge(1,0,2)	(2,2,2)edge(3,0,1);
			\end{scope}
			\node[align=center] at(2,-1,2) {(I) $S_{{ij}}$};
		\end{tikzpicture}
		\begin{tikzpicture}[scale=.9]
			\foreach \x in {1,2,3}\foreach \y in {1,2,3}{
				\fill (\x,0,\y)circle[radius=1.5pt];
				\fill (\x,1,\y)circle[radius=1.5pt];
				\fill (\x,2,\y)circle[radius=1.5pt];}
			\begin{scope}[every path/.style={blue,arrows={>-Stealth[fill=white]}}]\path
				(2,1,1)edge(1,1,2)	(2,1,1)edge(3,1,1)	(2,1,1)edge(1,2,2)	(2,1,1)edge(3,2,1)
				(2,1,1)edge(3,0,3)	(2,1,1)edge(1,0,1)	(2,1,1)edge(2,0,1)	(2,1,1)edge(2,2,1);
			\end{scope}
			\begin{scope}[every path/.style={purple,arrows={>-Stealth[fill=white]}}]\path
				(2,1,2)edge(1,1,3)	(2,1,2)edge(3,1,2)	(2,1,2)edge(1,2,3)  (2,1,2)edge(3,2,2)
				(2,1,2)edge(3,0,1)	(2,1,2)edge(1,0,2)	(2,1,2)edge(2,0,2)  (2,1,2)edge(2,2,2);
			\end{scope}
			\begin{scope}[every path/.style={olive,arrows={>-Stealth[fill=white]}}]\path
				(2,1,3)edge(1,1,1)	(2,1,3)edge(3,1,3)	(2,1,3)edge(1,2,1)	(2,1,3)edge(3,2,3)
				(2,1,3)edge(3,0,2)	(2,1,3)edge(1,0,3)	(2,1,3)edge(2,0,3)	(2,1,3)edge(2,2,3);
			\end{scope}
			\node[align=center] at(2,-1,2) {(II) $S_{X_j}$};
		\end{tikzpicture}
		\begin{tikzpicture}[scale=.9]
			\foreach \x in {1,2,3}\foreach \y in {1,2,3}{
				\fill (\x,0,\y)circle[radius=1.5pt];
				\fill (\x,1,\y)circle[radius=1.5pt];
				\fill (\x,2,\y)circle[radius=1.5pt];}
			\begin{scope}[every path/.style={blue,arrows={>-Stealth[fill=white]}}]
				\path (1,2,2)edge(1,2,3)	(1,2,2)edge(3,2,1)	(1,2,2)edge(1,0,1)	(1,2,2)edge(2,0,3)
				(1,2,2)edge(1,1,3)	(1,2,2)edge(3,1,1)	(1,2,2)edge(1,0,2);
			\end{scope}
			\begin{scope}[every path/.style={purple,arrows={>-Stealth[fill=white]}}]\path
				(2,2,2)edge(2,2,3)	(2,2,2)edge(1,2,1)	(2,2,2)edge(2,0,1)	(2,2,2)edge(3,0,3)
				(2,2,2)edge(2,1,3)	(2,2,2)edge(1,1,1)	(2,2,2)edge(2,0,2);
			\end{scope}
			\begin{scope}[every path/.style={olive,arrows={>-Stealth[fill=white]}}]\path
				(3,2,2)edge(3,2,3)	(3,2,2)edge(2,2,1)	(3,2,2)edge(3,0,1)	(3,2,2)edge(1,0,3)
				(3,2,2)edge(3,1,3)	(3,2,2)edge(2,1,1)	(3,2,2)edge(3,0,2);
			\end{scope}
			\node[align=center] at(2,-1,2) {(III) $S_{Y_i}$};
		\end{tikzpicture}
            \caption{The construction for $n=27$.}
		\label{figure:27}
\end{figure}
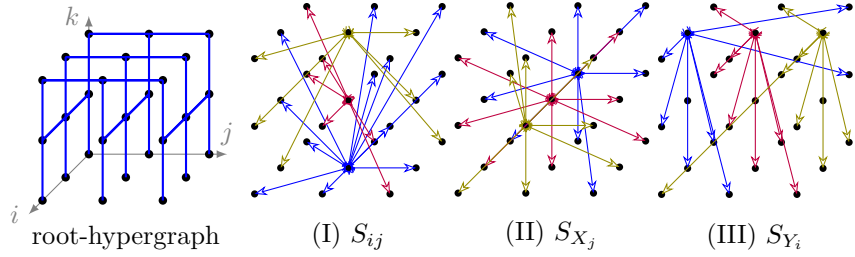

Due the symmetry arising from indices $i,j$ modulo 3, it suffices to check that for a fixed pair $(i,j)$ and for each $0\le k\le 2$, all edges of the form $(i,j,k)(i',j',k')$, where $k'\ge k$, are covered by the construction.

For the $8$ edges of the form $(i,j,0)(i',j',0)$, $4$ of them are covered by $S_{{ij}}$ while the others are covered by $S_{{i(j-1)}}$, $S_{{(i-1)j}}$, $S_{{(i-1)(j+1)}}$ and $S_{{(i+1)(j+1)}}$.

For the $9$ edges of the form $(i,j,0)(i',j',1)$, $4$ of them are covered by $S_{{ij}}$ while the others are covered by $S_{{(i+1)j}}$, $S_{{(i-1)(j-1)}}$, $S_{X_j}$, $S_{X_{(j+1)}}$ and $S_{X_{(j-1)}}$.

For the $9$ edges of the form $(i,j,0)(i',j',2)$, $4$ of them are covered by $S_{{ij}}$ while the others are covered by $S_{{i(j+1)}}$, $S_{{(i+1)(j-1)}}$, $S_{Y_i}$, $S_{Y_{(i+1)}}$ and $S_{Y_{(i-1)}}$.

For the $8$ edges of the form $(i,j,1)(i',j',1)$, $4$ of them are covered by $S_{{ij}}$ and $S_{X_j}$, while the others are covered by $S_{{(i-1)j}}$, $S_{{(i+1)(j+1)}}$, $S_{X_{(j+1)}}$ and $S_{X_{(j-1)}}$.

For the $9$ edges of the form $(i,j,1)(i',j',2)$, $5$ of them are covered by $S_{{ij}}$ and $S_{X_j}$, while the others are covered by $S_{{(i-1)(j+1)}}$, $S_{{i(j-1)}}$, $S_{Y_{(i+1)}}$ and $S_{Y_{(i-1)}}$.

For the $8$ edges of the form $(i,j,2)(i',j',2)$, $4$ of them are covered by $S_{{ij}}$ and $S_{Y_i}$, while the others are covered by $S_{{i(j-1)}}$, $S_{{(i-1)(j+1)}}$, $S_{Y_{(i+1)}}$ and $S_{Y_{(i-1)}}$.


\subsection{Blowup}\label{subsec:blowup}

\begin{lem}\label{lem:blowup}
For integers $k,n,m,t\ge 1$, if $m\le n-2$ and $F_k(n)\le m$, then $F_k(tn)\le tm$.
\end{lem}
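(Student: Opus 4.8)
The plan is to prove the blowup lemma by a direct "product" construction. Start with a decomposition $\mathcal{D}$ of $K_n$ into $m$ $k$-star-forests $S_1,\dots,S_m$ on vertex set $[n]$, and replace each vertex $v\in[n]$ by an independent set $V_v$ of size $t$, so the vertex set of $K_{tn}$ becomes $\bigsqcup_{v\in[n]} V_v$. The edges of $K_{tn}$ split naturally into two types: the \emph{cross edges} between $V_u$ and $V_v$ for $u\neq v$ (these form a complete bipartite graph $K_{t,t}$ for each edge $uv\in K_n$), and the \emph{internal edges} inside each $V_v$ (these form a copy of $K_t$). The strategy is to cover the cross edges using a blown-up version of $\mathcal{D}$, and then notice that this leaves enough "room" in the $k$-star-forests to also absorb the internal edges, so that no extra forests are needed beyond $tm$.

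The key steps, in order: First I would handle the cross edges. For each star-forest $S_i$ in $\mathcal{D}$ and each index $a\in[t]$, define a $k$-star-forest $S_i^{(a)}$ on $K_{tn}$ as follows: for every star $S(v; u_1,\dots,u_s)$ appearing in $S_i$, put into $S_i^{(a)}$ the star whose center is the $a$-th vertex of $V_v$ and whose leaves are \emph{all} $ts$ vertices of $V_{u_1}\cup\dots\cup V_{u_s}$. Since $S_i$ has at most $k$ components, so does $S_i^{(a)}$, and for distinct centers the leaf sets are disjoint since $S_i$ is a forest; so $S_i^{(a)}$ is a genuine $k$-star-forest. Running over $i\in[m]$ and $a\in[t]$ gives $tm$ $k$-star-forests, and because $\mathcal{D}$ covers every edge $uv$ of $K_n$ exactly once, every cross edge of $K_{tn}$ between $V_u$ and $V_v$ is covered exactly once (by the appropriate $S_i^{(a)}$, where $S_i$ is the forest containing the edge $uv$ and $a$ records which endpoint's copy in $V_u$ or $V_v$ is an endpoint). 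So far we have used exactly $tm$ forests and covered precisely the cross edges.

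Next I would absorb the internal edges. Here is where the hypothesis $m\le n-2$ enters. Fix a vertex $v\in[n]$; I need to cover the $\binom{t}{2}$ internal edges of $K_t$ on $V_v$, and I want to do this \emph{without creating new star-forests} and \emph{without violating the $k$-component bound}. The idea: in the decomposition $\mathcal{D}$, vertex $v$ is the center of a star in at most $m$ of the forests $S_i$; but since $|\mathcal{D}| = m \le n-2 < n-1$, by (the argument behind) \Cref{prop:no-iso} applied to $\mathcal{D}$, $v$ is a center in at least one $S_i$ — actually I want the stronger fact that $v$ is a \emph{non-center} (a leaf, or absent) in at least two forests of $\mathcal{D}$, which follows because a vertex that is a center in all but at most one forest would need those forests to cover its $n-1$ incident edges with at most one edge each from the $\le 1$ exceptional forest, forcing $m\ge n-1$. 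Pick two forests $S_i, S_j$ in which $v$ is not a center. In the blown-up forests $S_i^{(a)}$, none of them uses any vertex of $V_v$ as a center, and moreover each vertex $x\in V_v$ appears as a leaf of exactly one star in $S_i^{(a)}$ — wait, I must instead exploit that across the $t$ copies $S_i^{(1)},\dots,S_i^{(t)}$, I can reserve the "slot" at a vertex of $V_v$: concretely, designate a linear order on $V_v = \{x_1,\dots,x_t\}$ and greedily add the internal edges $x_bx_c$ $(b<c)$ as new stars centered at $x_b$ inside the forests $S_i^{(a)}$ and $S_j^{(a)}$, which currently have at most $k-1$ components touching $V_v$'s neighborhood structure — this needs care. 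Cleaner: since $S_i$ has $\le k$ components but $v$ is not among the centers, I can afford to interpret the internal $K_t$ on $V_v$ as itself decomposed (by the broken double star / \Cref{thm:2-forest}-type bound, or simply by $t-1$ stars) and fold those stars into the forests $S_i^{(a)}$ one per forest; there are enough forests because each of the $n$ sets $V_v$ needs only a bounded number of "slots" and the $tm$ forests provide $tm \ge t\cdot(\text{something} \ge n-1 > \dots)$ — I would make the counting precise by charging each internal star to a distinct blown-up forest in which its center's original vertex is a leaf.

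The main obstacle I anticipate is exactly this last bookkeeping: ensuring simultaneously, for all $n$ groups $V_v$ at once, that the internal edges can be distributed among the existing $tm$ forests so that (a) no forest exceeds $k$ components and (b) every forest still really is a forest (leaf sets of its stars stay disjoint, i.e. we never reuse a vertex as both a center and a leaf, or as a leaf twice). The condition $m \le n-2$ should be precisely what makes a counting/Hall-type argument go through — it guarantees every original vertex $v$ is a leaf (or unused) in at least two forests of $\mathcal{D}$, giving us at least $2t$ blown-up forests in which $V_v$'s vertices are never centers and can therefore safely host new stars on internal edges; since the internal $K_t$ on $V_v$ decomposes into at most $\lceil t/2\rceil+1 \le 2t$ star-forests (for $t\ge 2$; the case $t=1$ is trivial as there are no internal edges), we can inject them into those reserved forests, one star-forest's worth per reserved blown-up forest, taking care that the at-most-$k$ bound is maintained because each such reserved forest gains at most $\lceil t/2 \rceil + 1$... — and here I would instead simply add, to the blown-up forest $S_i^{(a)}$ (with $v$ a non-center of $S_i$), a \emph{single extra star} centered at $x_a \in V_v$ with some of the other vertices of $V_v$ as leaves, arranged so that over the relevant range of $a$ these stars decompose $K_t$ on $V_v$; a single extra component keeps us at $\le k$ as long as $S_i^{(a)}$ already had $\le k-1$ components, which holds whenever $S_i$ has $\le k-1$ components or $v$ was genuinely absent from $S_i$ — the residual cases where $S_i$ has exactly $k$ components and $v$ is a leaf of $S_i$ would be the delicate ones, resolved by instead attaching the $V_v$-internal leaves to an \emph{already-present} star of $S_i^{(a)}$ whose center lies in the neighbor set, which is legal since those leaves are new. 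I expect verifying that these attachments never collide is the crux, and the inequality $m\le n-2$ is the slack that guarantees a valid assignment exists.
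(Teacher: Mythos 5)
Your handling of the cross edges (the complete bipartite pieces between distinct blow-up classes $V_u$ and $V_v$) is correct and is essentially the paper's construction: for each original forest $S_j$ and each copy index $b\in\{1,\dots,t\}$, form a blown-up $k$-star-forest whose stars are centered at the $b$-th copies of the original centers, with leaves running over all copies of the original leaves; each cross edge is then covered exactly once, and no component is created or destroyed. The genuine gap is in your treatment of the internal edges inside each $V_v$. You set out to place them as \emph{new} stars in blown-up forests $S_i^{(a)}$ for which $v$ is \emph{not} a center of $S_i$. As you yourself observe, this immediately hits the $k$-component ceiling whenever $S_i$ already has $k$ stars, and your proposed patch --- attaching the $V_v$-internal edges to an already-present star of $S_i^{(a)}$ whose center is a copy of some neighbour $w\ne v$ --- cannot work: those internal edges are not incident to that center, so they cannot lie in a star centered there. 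The claim that ``$v$ genuinely absent from $S_i$'' helps is also wrong, since the component count of $S_i^{(a)}$ is that of $S_i$ regardless of whether $v$ appears. The Hall-type bookkeeping you flag as the crux is never actually closed.

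The move you need is the \emph{opposite} of the one you pursue, and it is exactly how the paper uses the hypothesis $m\le n-2$: by \Cref{prop:no-iso}, every vertex $v$ is the center of some star in at least one forest $S_j$. In the corresponding blown-up forest $\tilde{S}_{jb}$ there is therefore already a star centered at the $b$-th copy of $v$; one simply \emph{enlarges} that star by appending the other $t-1$ copies of $v$ as extra leaves. This adds no new component, so $\tilde{S}_{jb}$ stays a $k$-star-forest, and letting $b$ run over $1,\dots,t$ covers every internal edge of $V_v$ (in fact twice; duplicated coverage is deleted at the end). You mention the fact ``$v$ is a center in at least one $S_i$'' only to discard it in favor of the --- here unhelpful --- fact that $v$ is a non-center at least twice. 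As written, the proposal does not give a complete proof of the lemma.
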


\begin{proof}
Let $\{P_1,P_2,\dots,P_n\}$ be the vertex set of $K_n$, and $S_1,S_2,\dots,S_m$ be the k-star-forests decomposing $K_n$. Denote the vertices of $K_{tn}$ by $Q_{ab}(a\in\{1,2,\dots,n\}, b\in\{1,2,\dots,t\})$, and construct k-star-forests in $K_{tn}$ as follows:

For each $1\le j\le m$, we construct  $\tilde{S}_{j1},\tilde{S}_{j2},\dots,\tilde{S}_{jt}$ such that centers of $\tilde{S}_{jb}$ are $\{Q_{ib}~\big|~P_i~\text{is a center of}~S_j\}$, and
	
$\tilde{S}_{jb}$ contains edges 
$\begin{cases}
Q_{ib}Q_{ub'},~~ \forall b'\in\{1,2,\dots,t\},~\text{if }P_iP_u\in S_j,\\
Q_{ib}Q_{ib'},~~ \forall b'\neq b.
\end{cases}$
	
By \Cref{prop:no-iso}, the condition $m\le n-2$ guarantees that every $P_i$ acts as a center in some forest at least once. Thus for each $1\le i\le n$, all edges inside $\mathcal{Q}_i:=\{Q_{ib}| 1\le b\le t\}$ are covered by $\tilde{S}_{j1},\dots,\tilde{S}_{jt}$ if $P_i$ is a center of $S_j$. Further, since $S_j (1\le j\le m)$ cover all edges in $K_n$, edges between $\mathcal{Q}_i$ and $\mathcal{Q}_u$ are covered by $\tilde{S}_{j1},\dots,\tilde{S}_{jt}(P_iP_u\in S_j)$ for all $1\le i<u\le n$.
	
There might be some edges covered more than once by $\tilde{S}_{ab}$. In that case, one needs to delete some edges in some $\tilde{S}_{ab}$ to make the construction a decomposition. 
\end{proof}

\Cref{thm:3-star} follows by the construction for $n=27$ and \Cref{lem:blowup}.

\section{Decompostion into 4-star forests}\label{sec:4-star}

In this section, we present constructions for decompositions of $K_n$ into $\frac{n}{2}+2$ $4$-star forests when $n\equiv 4 (\mod 12)$, hence proving the Theorem~\ref{thm:4-star}.

\subsection{The construction for $n=16$}\label{subsec:16}
Let $A_0\sqcup B\sqcup C\sqcup A_1$ be the vertex set of $K_{16}$ where 
$$A_0=\{A_0(0),A_0(1),A_0(2),A_0(3)\},$$ 
$$B=\{B(0),B(1),B(2),B(3)\},$$ 
$$C=\{C(0),C(1),C(2),C(3)\}$$ and 
$$A_1=\{A_1(0),A_1(1),A_1(2),A_1(3)\}.$$
Here the indices in parentheses are modulo 4. 

\begin{figure}[h]
	\centering
	\begin{tikzpicture}
		\fill[olive!20] (-0.4,-1)rectangle(0.4,3.3) (2.6,-1)rectangle(3.4,3.3);
		\fill[orange!20] (0.6,-1)rectangle(1.4,3.3);
		\fill[violet!20] (1.6,-1)rectangle(2.4,3.3);
		\foreach \y  in {0,1,2,3}{
			\node[circle,inner sep=2,fill,label={[scale=.8]270:$A_0(\y)$}] at(0,\y) {};
			\node[circle,inner sep=2,fill,label={[scale=.8]270:$B(\y)$}] at(1,\y) {};
			\node[circle,inner sep=2,fill,label={[scale=.8]270:$C(\y)$}] at(2,\y) {};
			\node[circle,inner sep=2,fill,label={[scale=.8]270:$A_1(\y)$}] at(3,\y) {};
			\draw[rounded corners] (-0.4,\y-0.5)rectangle(3.4,\y+0.3);
			\node at(3.9,\y-0.2) {$X_{\y}$};
		}
		\node at(0,-0.8){$A_0$}; \node at(1,-0.8){$B$}; \node at(2,-0.8){$C$}; \node at(3,-0.8){$A_1$};
	\end{tikzpicture}
        \caption{Labeling and grouping the vertices of $K_{16}$.}
\end{figure}
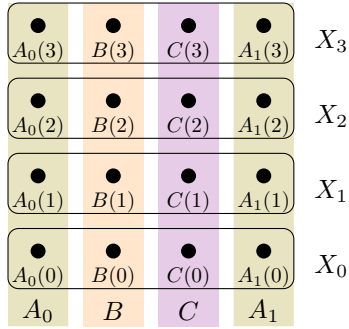
 
Let $B=\{B(0),B(1),B(2),B(3)\}$ be the centers of the 4-star-forest $S_{B}$ consisting of stars
$$
S (B(i); A_0(i), C(i+2), A_1(i) ),~~~0\le i\le 3.
$$

Let $C=\{C(0),C(1),C(2),C(3)\}$ be the centers of the 4-star-forest $S_{C}$ consisting of stars
$$
S (C(i); A_0(i-1), B(i), A_1(i) ),~~~0\le i\le 3.
$$

For $0\le i\le 3$, let 
$$X_i=\{A_0(i), B(i), C(i), A_1(i)\}$$
be the centers of the 4-star-forest $S_{X_i}$ consisting of stars
$$
S (A_0(i); A_0(i-1) ),
$$
$$
S (B(i); A_0(i+2), B(i-1), B(i+2), C(i+1), A_1(i+1) ),
$$
$$
S (C(i); A_0(i+1), B(i+1), C(i-1), C(i+2), A_1(i-1) ),
$$
and
$$
S (A_1(i); A_1(i+2) ).
$$

For $0\le i\le 1$, let $Y_i=\{A_0(2i),A_0(2i+1)\}$ be the centers of the 2-star-forest $S_{Y_i}$ consisting of stars
$$
S (A_0(j); B(j-1), B(j+1), C(j), C(j+2), A_0(j+2) ),~~~j\in\{2i,2i+1\}.
$$

For $0\le i\le 1$, let $Z_i=\{A_1(i),A_1(i+2)\}$ be the centers of the 2-star-forest $S_{Z_i}$ consisting of stars
$$
S (A_1(j); B(j+1), B(j+2), C(j-1), C(j+2), A_1(j+1) ),~~~j\in\{i,i+2\}.
$$

It is easy to check that the star-forests constructed above cover all edges of $K_{16}$ except for those between $A_0$ and $A_1$. For example, to check that all edges between $B$ and $C$ are covered, by symmetry arising from $i$ modulo 4, it suffices to check that, for fix $i$ and every $j\in \{i-1,~i,~i+1,~i+2\}$, edges of the form $B(i)C(j)$ are covered. Indeed, edges of the form $B(i)C(i-1)$ is covered by $S_{X_{i-1}}$, edges of the form $B(i)C(i)$ is covered by $S_C$, edges of the form $B(i)C(i+1)$ is covered by $S_{X_i}$, and edges of the form $B(i)C(i+2)$ is covered by $S_B$. Similarly, we can check that the edges inside each of $A_0$, $B$, $C$, and $A_1$ are covered, and that the edges between $A_0$ and $B$, $A_0$ and $C$, $A_1$ and $B$, and $A_1$ and $C$ are covered. 
We omit the details here.

We cannot find a symmetric way to cover edges between $A_0$ and $A_1$. Instead, those edges are covered by adding edges to $S_{Y_i}$ and $S_{Z_i}$ in an asymmetric way as shown in \Cref{fig:A0A1}.
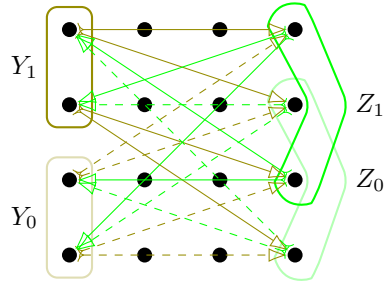
\begin{figure}[h]
		\centering
		\begin{tikzpicture}[rounded corners]
			\foreach \x in {0,1,2,3} \foreach \y in {0,1,2,3}{
				\node[circle,fill=black,inner sep=2](\x\y)at(\x,\y){};
			}
			\draw[olive,thick,opacity=0.3](-0.3,-0.3)rectangle(0.3,1.3);\node at(-0.6,0.5) {$Y_0$};
			\draw[olive,thick](-0.3,1.7)rectangle(0.3,3.3);\node at(-0.6,2.5) {$Y_1$};
			\foreach \y/\op in {0/0.3,1/1}{
				\draw[green,thick,opacity=\op](2.8,\y+0.1)arc(150:330:0.2828)--(3.7,\y+1)--(3.2,\y+2.2)arc(30:210:0.2828)--(3.2,\y+1)--cycle;
				\node at(4,\y+1) {$Z_\y$};
			}
			\draw[olive,>-{Latex[fill=none,length=7]}](03)edge(33)edge(32)
			(02)edge(31)edge(30);
			\draw[olive,>-{Latex[fill=none,length=7]},dashed](01)edge(33)edge(32)
			(00)edge(31)edge(30);
			\draw[green,>-{Latex[fill=none,length=7]}](33)edge(00)edge(02)
			(31)edge(01)edge(03);
			\draw[green,>-{Latex[fill=none,length=7]},dashed](32)edge(00)edge(02)
			(30)edge(01)edge(03);
		\end{tikzpicture}
            \caption{Covering edges between $A_0$ and $A_1$.}
            \label{fig:A0A1}
\end{figure}

Note that the number of 4-star-forests in this construction is $10=n/2+2$. This completes the proof of Theorem~\ref{thm:4-star} when $n=16$.

\subsection{Root-hypergraphs for $n=12m+4$}\label{subsec:12m+4}
In the rest of this section we extend the construction for $n=16$ to $n=12m+4$. Let us first describe the root-hypergraphs of our constructions. We partition the vertices of $K_{12m+4}$ into $3m+1$ sets $A_0,\dots, A_m$, $B_0,\dots, B_{m-1}$ and $C_0,\dots, C_{m-1}$ such that all of these sets have size four. For all $0\le i\le m$, we label the vertices in $A_i$ by $A_i(0),~A_i(1),~A_i(2)$ and $A_i(3)$, where the indices in parentheses are modulo 4. We label the vertices in $B_i$ and $C_i$ similarly. The edges in the root hypergraphs are 
$$X_{ki}=\{A_k(i), B_k(i), C_k(i), A_{k+1}(i)\}~(0\le k\le m-1, 0\le i\le 3),$$
$$B_k=\{B_k(0), B_k(1), B_k(2), B_k(3)\}~(0\le k\le m-1),$$
$$C_k=\{C_k(0), C_k(1), C_k(2), C_k(3)\}~(0\le k\le m-1),$$
$$Y_i=\{A_0(2i), A_0(2i+1)\}~(0\le i\le 1),$$
$$Z_i=\{A_m(i), A_m(i+2)\}~(0\le i\le 1).$$

 See \Cref{figure:12m+4} for an illustration of the root-hypergraph. Note that the number of edges here is $6m+4=n/2+2$. One can easily check that the root-hypergraph of our construction for $K_{16}$ is consistent with \Cref{figure:12m+4} when taking $m=1$.
\begin{figure}[h]
		\centering
		\begin{tikzpicture}
			\foreach \x/\y in {0/0,3/1,6/2,8/m-1,11/m}
				{\fill[olive!20](\x-0.4,0.2)rectangle(\x+0.4,4.4);
				\node at(\x,0.4) {$A_{\y}$};}
			\foreach \x/\y in {1/0,4/1,9/m-1}
				{\fill[orange!20](\x-0.4,0.2)rectangle(\x+0.4,4.4);
				\node at(\x,0.4) {$B_{\y}$};
				\fill[violet!20](\x+0.6,0.2)rectangle(\x+1.4,4.4);
				\node at(\x+1,0.4) {$C_{\y}$};}
			\foreach  \x in {0,3,8} \foreach \y in{1,2,3,4}
			{\draw[rounded corners=5pt,thick](\x-0.3,\y-0.3)rectangle(\x+3.3,\y+0.3);}
			\foreach \x in {1,2,4,5,9,10}
			{\draw[rounded corners=5pt,thick](\x-0.3,0.7)rectangle(\x+0.3,4.3);}
			\foreach \y in{1,2,3,4}
			{\draw[rounded corners=5pt,thick](5.7,\y-0.3)rectangle(8.3,\y+0.3);}
			\foreach \y in {1,2}
			{\draw[rounded corners=5pt,thick](-0.3,2*\y-1.3)rectangle(0.3,2*\y+0.3);
			\draw[thick,rounded corners](10.8,\y+0.1)arc(150:330:0.2828)--(11.7,\y+1)--(11.2,\y+2.2)arc(30:210:0.2828)--(11.2,\y+1)--cycle;}
			\fill[white](6.5,0.3)rectangle(7.5,4.4);
			\node foreach \x in{0,...,6,8,9,10,11} foreach \y in{1,2,3,4}[inner sep=2,fill,circle]at(\x,\y) {};
			\node foreach \y in{0.5,1,2,3,4} at(7,\y) {. . .};
			\node foreach \y in{0.7,1.7,2.7,3.7,1.3,2.3,3.3,4.3}[blue] at(7,\y) {. . . .};
		\end{tikzpicture}
            \caption{The root-hypergraph for $n=12m+4$.}
            \label{figure:12m+4}
\end{figure}
\subsection{Short-distance Rule}\label{subsec:short-dist}
For each edge $E$ in the root-hypergraph, let $S_E$ denote the star-forest using vertices in $E$ as centers. The `Short-distance rule' focuses on the edges connecting pair of vertices with `short distance', i.e. edges inside each of $A_i$, $B_i$, $C_i$ and the edges between $A_i$ and $B_i$, $A_i$ and $C_i$, $A_{i+1}$ and $B_i$, $A_{i+1}$ and $C_i$, and $B_i$ and $C_i$:

For $0\le k\le m-1$, $S_{B_k}$ contains the stars
$$
S (B_k(i); A_k(i), C_k(i+2), A_{k+1}(i) ),~(0\le i\le 3),
$$
and $S_{C_k}$ contains the stars
$$
S (C_k(i); A_k(i-1), B_k(i), A_{k+1}(i) ),~(0\le i\le 3).
$$

For $0\le i\le 3,~0\le k\le m-1$, $S_{X_{ki}}$ contain the stars
\[\begin{cases}
		S (A_k(i); A_k(i-1), B_{k-1}(i+1), B_{k-1}(i+2), C_{k-1}(i-1), C_{k-1}(i+2) ),\\
		S (B_k(i); A_k(i+2), B_k(i-1), B_k(i+2), C_k(i+1), A_{k+1}(i+1) ),\\
		S (C_k(i); A_k(i+1), B_k(i+1), C_k(i-1), C_k(i+2), A_{k+1}(i-1) ),\\
		S (A_{k+1}(i); A_{k+1}(i+2), B_{k+1}(i-1), B_{k+1}(i+1), C_{k+1}(i), C_{k+1}(i+2) ).
\end{cases}\]

And for $0\le i\le 1$, $S_{Y_i}$ contain the stars
$$
S (A_0(j); B_0(j-1), B_0(j+1), C_0(j), C_0(j+2), A_0(j+2) )~(j\in\{2i,2i+1\}),
$$
and $S_{Z_i}$ contain the stars
$$
S (A_m(j); B_{m-1}(j+1), B_{m-1}(j+2), C_{m-1}(j-1), C_{m-1}(j+2), A_m(j-1) ),
$$
($j\in\{i,i+2\}$).
	
Similar to what we have done to $K_{16}$, one can check that the construction above covers all `short-distance' edges. More formally, it covers all of the following edges:
$$
\{P_k(i)P_k(j)|P\in \{A, B, C\}, 0\le k\le m-1, 0\le i\neq j\le 3\},
$$
$$
\{P_l(j)A_k(i)|P\in \{B, C\}, 0\le l\le m-1, k\in\{l, l+1\}, 0\le i,j\le 3\},\\
$$
and
$$
\{B_k(i)C_k(j)|0\le k\le m-1, 0\le i,j\le 3\}.
$$

\subsection{Long-distance Rule}\label{subsec:long-dist}
The `Long-distance Rule' focuses on the edges connected pair of vertices not in $A_0\cup A_m$ with `Long Distance'. Precisely, for $0\le k\le m-1, 0\le i\le3$,
let $S_{X_{ki}}$ contain the following stars:
\begin{align*}
		\smashoperator[r]{\bigcup_{\substack{0\le j\le k-1 \\ 0\le j'\le k-2}}} 
        S (A_k(i); A_{j}(i+1), 
        A_{j}(i-1),
        B_{j'}(i-1),
        B_{j'}(i),
        C_{j'}(i), 
        C_{j'}(i+2) ),\\
        \bigcup_{\substack{0\le j\le k-1 \\ k+1\le l\le m-1}}
		S (B_k(i); A_{j}(i),
        B_{j}(i+1),
        C_{j}(i+1), 
        A_{l+1}(i-1), 
        B_l(i), 
        C_l(i) ),\\
        \bigcup_{\substack{0\le j\le k-1 \\ k+1\le l\le m-1}}
		S (C_k(i+2); A_{j}(i),
        B_{j}(i),
        C_{j}(i+1), 
        A_{l+1}(i-1), 
        B_l(i), 
        C_l(i) ),\\
        \smashoperator[r]{\bigcup_{l=k+2}^{m-1} }
		S (A_{k+1}(i); 
        A_l(i), 
        A_l(i+2), 
        B_l(i-1), 
        B_l(i+1), 
        C_l(i-1), 
        C_l(i+1), \\
        A_m(i), A_m(i+1) ).
\end{align*}
For $0\le k\le m-1$,
let $S_{B_k}$ contain the following stars:
\begin{align*}
        \smash{\bigcup_{\substack{0\le j\le k-1 \\ k+1\le l\le m-1 \\ k+2\le l'\le m-1 }}}
		S (B_k(i); A_j(i+2), A_{l'}(i+2), A_m(i), B_j(i+2), B_l(i+1),\\ C_j(i-1), C_l(i+1) ),\\~~0\le i\le3,
\end{align*}
and let $S_{C_k}$ contain the following stars:
\begin{align*}
        \smash{\bigcup_{\substack{0\le j\le k-1 \\ k+1\le l\le m-1 \\ k+2\le l'\le m-1 }}}
		S (C_k(i); A_j(i), A_{l'}(i-1), A_m(i+2), B_j(i+1), B_l(i), \\ C_j(i), C_l(i-1) )\\ ~~0\le i\le3.
\end{align*}
So far, we have covered all of the long-distance edges between $V(K_{12m+4})\setminus(A_0\cup A_m)$. We have also covered some of the `long-distance' edges containing vertices in $A_0\cup A_m$. It seems complicated to check this, and one can refer to subsection \ref{4sfCon} to make sense of the `Long-distance Rule' of our construction.
	
\subsection{Boundry Rule}\label{subsec:boundary}
To cover the remaining `long-distance' edges containing vertices in $A_0\cup A_m$, we add more edges into the star-forests $S_{Y_i}$'s and $S_{Z_i}$'s. For $0\le i\le 1$,
let $S_{Y_i}$ contain the following stars:
\begin{align*}
        \smash{\bigcup_{l=0}^{m-1}}
		S (A_0(2i+j); A_l(2i+j), A_l(2i+j+2), B_l(2i+j+1), B_l(2i+j-1),\\ C_l(2i+j+1), C_l(2i+j-1) ),~~0\le j\le1,
\end{align*}
and let $S_{Z_i}$ contain the following stars:
\begin{align*}
        \smash{\bigcup_{l=0}^{m-1}}
		S (A_m(i+2j); A_l(i+2j+1), A_l(i+2j+2), B_l(i+2j+2), B_l(i+2j-1),\\ C_l(i+2j), C_l(i+2j+1) ),~~0\le j\le1.
\end{align*}
Finally, we add edges between $A_0$ and $A_m$ to $S_{Y_i}$ and $S_{Z_i}$ as shown in \Cref{fig:A0Am}, actually exactly the same as what we have done to $K_{16}$.

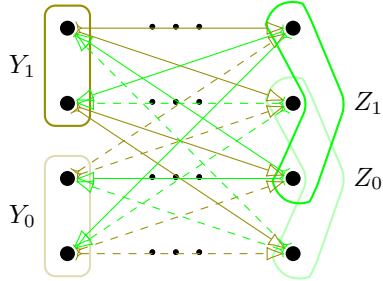
\begin{figure}[h]
		\centering
		\begin{tikzpicture}[rounded corners]
			\foreach \x in {0,3} \foreach \y in {0,1,2,3}{
				\node[circle,fill=black,inner sep=2](\x\y)at(\x,\y){};
				\node[scale=2] at(1.5,\y) {$\cdots$};
			}
			\draw[olive,thick,opacity=0.3](-0.3,-0.3)rectangle(0.3,1.3);\node at(-0.6,0.5) {$Y_0$};
			\draw[olive,thick](-0.3,1.7)rectangle(0.3,3.3);\node at(-0.6,2.5) {$Y_1$};
			\foreach \y/\op in {0/0.3,1/1}{
				\draw[green,thick,opacity=\op](2.8,\y+0.1)arc(150:330:0.2828)--(3.7,\y+1)--(3.2,\y+2.2)arc(30:210:0.2828)--(3.2,\y+1)--cycle;
				\node at(4,\y+1) {$Z_\y$};
			}
			\draw[olive,>-{Latex[fill=none,length=7]}](03)edge(33)edge(32)
			(02)edge(31)edge(30);
			\draw[olive,>-{Latex[fill=none,length=7]},dashed](01)edge(33)edge(32)
			(00)edge(31)edge(30);
			\draw[green,>-{Latex[fill=none,length=7]}](33)edge(00)edge(02)
			(31)edge(01)edge(03);
			\draw[green,>-{Latex[fill=none,length=7]},dashed](32)edge(00)edge(02)
			(30)edge(01)edge(03);
		\end{tikzpicture}
            \caption{Covering edges between $A_0\cup A_m$.}
            \label{fig:A0Am}
\end{figure}

 \subsection{Conclusion}\label{4sfCon}
 
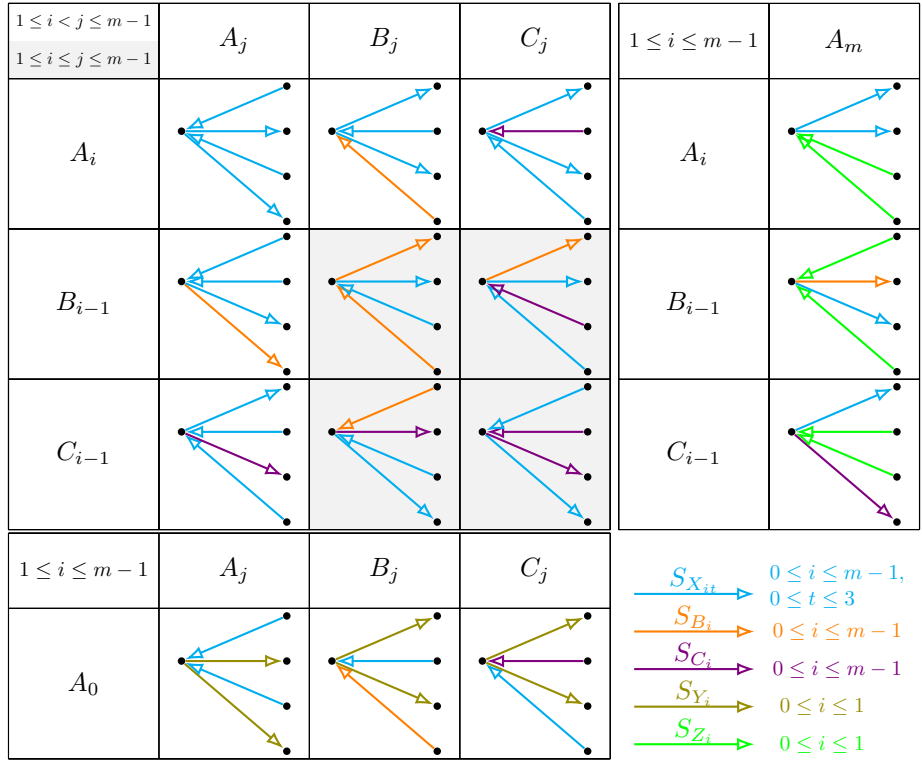
\begin{figure}[t]
		
		\begin{tikzpicture}
			\fill[gray!10] (4,0)rectangle(8,4) (0,6)rectangle(2,6.5);
			\foreach \y in {0,2,4,6,7}{\foreach \x in {0,2,4,6,8}{\draw (\x,0)edge(\x,7) (0,\y)edge(8,\y);}}
			\node[scale=.65] at (1,6.75) {$1\le i< j\le m-1$};
			\node[scale=.65] at (1,6.25) {$1\le i\le j\le m-1$};
			\node at (1,5) {$A_i$};\node at (1,3) {$B_{i-1}$};\node at (1,1) {$C_{i-1}$};
			\node at(3,6.5) {$A_j$};\node at(5,6.5) {$B_j$};\node at(7,6.5){$C_j$};
			\foreach \x in {3,5,7}{\foreach \y in {1,3,5}{
			\node[circle,fill,inner sep=1](\x\y)at(\x-0.7,\y+0.3) {};
			\node[circle,fill,inner sep=1](\x\y0)at(\x+0.7,\y-0.9) {};
			\node[circle,fill,inner sep=1](\x\y1)at(\x+0.7,\y-0.3) {};
			\node[circle,fill,inner sep=1](\x\y2)at(\x+0.7,\y+0.3) {};
			\node[circle,fill,inner sep=1](\x\y3)at(\x+0.7,\y+0.9) {};
			}}
			\draw[thick] (35)edge[-{Latex[fill=none]},cyan](350)edge[{Latex[fill=none]}-,cyan](351)edge[-{Latex[fill=none]},cyan](352)edge[{Latex[fill=none]}-,cyan](353)
			(55)edge[{Latex[fill=none]}-,orange](550)edge[-{Latex[fill=none]},cyan](551)edge[{Latex[fill=none]}-,cyan](552)edge[-{Latex[fill=none]},cyan](553)
			(75)edge[{Latex[fill=none]}-,cyan](750)edge[-{Latex[fill=none]},cyan](751)edge[{Latex[fill=none]}-,violet](752)edge[-{Latex[fill=none]},cyan](753)
			(33)edge[-{Latex[fill=none]},orange](330)edge[-{Latex[fill=none]},cyan](331)edge[{Latex[fill=none]}-,cyan](332)edge[{Latex[fill=none]}-,cyan](333)
			(53)edge[{Latex[fill=none]}-,orange](530)edge[{Latex[fill=none]}-,cyan](531)edge[-{Latex[fill=none]},cyan](532)edge[-{Latex[fill=none]},orange](533)
			(73)edge[{Latex[fill=none]}-,cyan](730)edge[{Latex[fill=none]}-,violet](731)edge[-{Latex[fill=none]},cyan](732)edge[-{Latex[fill=none]},orange](733)
			(31)edge[{Latex[fill=none]}-,cyan](310)edge[-{Latex[fill=none]},violet](311)edge[{Latex[fill=none]}-,cyan](312)edge[-{Latex[fill=none]},cyan](313)
			(51)edge[-{Latex[fill=none]},cyan](510)edge[{Latex[fill=none]}-,cyan](511)edge[-{Latex[fill=none]},violet](512)edge[{Latex[fill=none]}-,orange](513)
			(71)edge[-{Latex[fill=none]},cyan](710)edge[-{Latex[fill=none]},violet](711)edge[{Latex[fill=none]}-,violet](712)edge[{Latex[fill=none]}-,cyan](713);
		\end{tikzpicture}
		\begin{tikzpicture}
			\foreach \y in {0,2,4,6,7}{\foreach \x in {0,2,4}{\draw (\x,0)edge(\x,7) (0,\y)edge(4,\y);}}
			\node[scale=.8] at (1,6.5) {$1\le i\le m-1$};
			\node at (1,5) {$A_i$};\node at (1,3) {$B_{i-1}$};\node at (1,1) {$C_{i-1}$};
			\node at(3,6.5) {$A_m$};
			\foreach \y in {1,3,5}{
					\node[circle,fill,inner sep=1](\y)at(3-0.7,\y+0.3) {};
					\node[circle,fill,inner sep=1](\y0)at(3+0.7,\y-0.9) {};
					\node[circle,fill,inner sep=1](\y1)at(3+0.7,\y-0.3) {};
					\node[circle,fill,inner sep=1](\y2)at(3+0.7,\y+0.3) {};
					\node[circle,fill,inner sep=1](\y3)at(3+0.7,\y+0.9) {};
			}
			\draw[thick]
			(5)edge[{Latex[fill=none]}-,green](50)edge[{Latex[fill=none]}-,green](51)edge[-{Latex[fill=none]},cyan](52)edge[-{Latex[fill=none]},cyan](53)
			(3)edge[{Latex[fill=none]}-,green](30)edge[-{Latex[fill=none]},cyan](31)edge[-{Latex[fill=none]},orange](32)edge[{Latex[fill=none]}-,green](33)
			(1)edge[-{Latex[fill=none]},violet](10)edge[{Latex[fill=none]}-,green](11)edge[{Latex[fill=none]}-,green](12)edge[-{Latex[fill=none]},cyan](13);
		\end{tikzpicture}
		\begin{tikzpicture}
			\foreach \y in {0,2,3}{\foreach \x in {0,2,4,6,8}{\draw (\x,0)edge(\x,3) (0,\y)edge(8,\y);}}
			\node[scale=.8] at (1,2.5) {$1\le i\le m-1$};
			\node at (1,1) {$A_0$};
			\node at(3,2.5) {$A_j$};\node at(5,2.5) {$B_j$};\node at(7,2.5){$C_j$};
			\foreach \x in {3,5,7}{
					\node[circle,fill,inner sep=1](\x)at(\x-0.7,1+0.3) {};
					\node[circle,fill,inner sep=1](\x0)at(\x+0.7,1-0.9) {};
					\node[circle,fill,inner sep=1](\x1)at(\x+0.7,1-0.3) {};
					\node[circle,fill,inner sep=1](\x2)at(\x+0.7,1+0.3) {};
					\node[circle,fill,inner sep=1](\x3)at(\x+0.7,1+0.9) {};
			}
			\draw[thick]
			(3)edge[-{Latex[fill=none]},olive](30)edge[{Latex[fill=none]}-,cyan](31)edge[-{Latex[fill=none]},olive](32)edge[{Latex[fill=none]}-,cyan](33)
			(5)edge[{Latex[fill=none]}-,orange](50)edge[-{Latex[fill=none]},olive](51)edge[{Latex[fill=none]}-,cyan](52)edge[-{Latex[fill=none]},olive](53)
			(7)edge[{Latex[fill=none]}-,cyan](70)edge[-{Latex[fill=none]},olive](71)edge[{Latex[fill=none]}-,violet](72)edge[-{Latex[fill=none]},olive](73);
		\end{tikzpicture}
		\begin{tikzpicture}
			\clip (0,0.3)rectangle(4,3);
			\draw[thick] (.2,2.5)edge[-{Latex[fill=none]},cyan]node[above,inner sep=0,cyan]{$S_{X_{it}}$}(1.8,2.5)
			(.2,2)edge[-{Latex[fill=none]},orange]node[above,inner sep=0,orange]{$S_{B_i}$}(1.8,2)
			(.2,1.5)edge[-{Latex[fill=none]},violet]node[above,inner sep=0,violet]{$S_{C_i}$}(1.8,1.5)
			(.2,1)edge[-{Latex[fill=none]},olive]node[above,inner sep=0,olive]{$S_{Y_i}$}(1.8,1)
			(.2,.5)edge[-{Latex[fill=none]},green]node[above,inner sep=0,green]{$S_{Z_i}$}(1.8,.5);;
			\node[cyan,scale=.8,align=left] at(2.9,2.6) {$0\le i\le m-1,$\\$0\le t\le 3$};
			\node[orange,scale=.8] at (2.9,2) {$0\le i\le m-1$};
			\node[violet,scale=.8] at (2.9,1.5) {$0\le i\le m-1$};
			\node[olive,scale=.8] at (2.7,1) {$0\le i\le 1$};
			\node[green,scale=.8] at (2.7,.5) {$0\le i\le 1$};
		\end{tikzpicture}
            \caption{Distribution of `long-distance' edges to star-forests.}
            \label{fig:long-distance-distribution}
\end{figure}

 To show that our construction actually cover all the `long-distance' edges, we draw \Cref{fig:long-distance-distribution}. It shows which star-forest each type of `long-distance' edges is distributed to. For example, the left-upper corner of \Cref{fig:long-distance-distribution} shows that for each pair $i,~j$ with $1\le i< j\le m-1$, the edges between $A_i$ and $A_j$ are distributed as follows: \\
 \indent edge of the form $A_i(t)A_j(t+1)$ is covered by $S_{X_{j(t+1)}}$,\\
 \indent edge of the form $A_i(t)A_j(t)$ is covered by $S_{X_{it}}$,\\
 \indent edge of the form $A_i(t)A_j(t-1)$ is covered by $S_{X_{j(t-1)}}$, and\\
 \indent edge of the form $A_i(t)A_j(t-2)$ is covered by $S_{X_{it}}$.
	
Note that all k-star-forests in our construction contain $(n-k)$ edges (for $k\in \{2,4\}$), thus they contain
$$
(n-4)\times\frac{n}{2}+(n-2)\times4=\frac{n^2}{2}=\frac{n(n-1)}{2}+\frac{n}{2}
$$
edges in total, and in fact there are exactly $\frac{n}{2}$ edges that are covered twice in our construction, which is $P_k(i)P_k(i+2)$ for every $0\le k\le m(P=A)$ or $0\le k\le m-1(P=B,C) $ and $0\le i\le 1$. This provides another perspective to check our construction.
	
\section*{Acknowledgement}

Jiaxi Nie wishes to thank János Pach for bringing this question to his attention during his visit to the Alfréd Rényi Institute of Mathematics in January 2024.

\bibliographystyle{abbrv}
\bibliography{refs}

\end{document}